\theoremstyle{plain}
\newtheorem{theorem}{Theorem}
\newtheorem{lemma}{Lemma}
\theoremstyle{remark}
\newtheorem*{W1}{\textbf{Theorem W1}}
\newtheorem*{W2}{\textbf{Theorem W2}}
\newtheorem*{GN1}{\textbf{Theorem GN1}}
\newtheorem*{GN2}{\textbf{Theorem GN2}}
\newtheorem*{SWS}{\textbf{Theorem SWS}}
\newtheorem{example}{Example}
\begin{document}
\author{Ushangi Goginava }
\title[Maximal Operators of Walsh-Nörlund means]{Maximal Operators of Walsh-N%
örlund means On the Dyadic Hardy Spaces}
\address{U. Goginava, Department of Mathematical Sciences \\
United Arab Emirates University, P.O. Box No. 15551\\
Al Ain, Abu Dhabi, UAE}
\email{zazagoginava@gmail.com; ugoginava@uaeu.ac.ae}
\address{˙ }

\begin{abstract}
The presented paper will be proved the necessary and sufficient conditions
in order maximal operator of Walsh-Nörlund means with non-increasing weights
to be bounded from the dyadic Hardy space $H_{p}(\mathbb{I})$\ to the space $%
L_{p}(\mathbb{I})$.
\end{abstract}

\maketitle

\bigskip \footnotetext{%
2010 Mathematics Subject Classification. 42C10.
\par
Key words and phrases: Walsh System, N\"orlund Mean, Hardy Spaces, weak type
inequality, Almost Everywhere Convergence.
\par
The author is very thankful to United Arab Emirates University (UAEU) for
the Start-up Grant 12S100.}

\section{\protect\bigskip Introduction}

In 1992 Móricz and Siddiqi investigated the rate of the approximation by Nö%
rlund means of Walsh-Fourier series \cite{Moricz-Siddiqi}. For Nörlund means
with monotone weights they gave a sufficient condition which provide Nörlund
means for having convergence in $L_{p}$ norm ($1\leq p<\infty $) and $C_{W}$
norm.

The result of \cite{Moricz-Siddiqi} was extended by Fridli, Manchanda and
Siddiqi \cite{FMS-08-ASM} for dyadic martingale Hardy spaces and dyadic
homogeneous Banach spaces . Recently, the theorem of Móricz and Siddiqi was
generalized for $\Theta $-means of Walsh-Fourier series in $L_{p}$ spaces ($%
1\leq p<\infty $) and $C_{W}$ \cite{BlahotaNagy2018}.

The theorems mentioned above are related to the approximation of the Nörlund
means which in turn is related to the uniformly boundedness of the
corresponding operators of the Nörlund means. To study of almost everywhere
convergence of Nörlund means is connected to the study of the boundedness of
the maximum operators corresponding to the Nörlund means.

The first result with respect to the a.e. convergence of the Walsh-Fejér
means is due to Fine \cite{fine1955cesaro}. Later, Schipp \cite%
{schipp1975certain} showed that the maximal operator of the Walsh-Fejér
means is of weak type (1, 1), from which the a. e. convergence follows by
standard argument \cite{marcinkiewicz1939summability}. Schipp result implies
by interpolation also the boundedness of $\sup\limits_{n}\left\vert \sigma
_{n}^{1}\right\vert :L_{p}\rightarrow L_{p}\,\left( 1<p\leq \infty \right) $%
. This fails to hold for $p=1$ but Fujii \cite{fujii1979cesaro} proved that $%
\sup\limits_{n}\left\vert \sigma _{n}^{1}\right\vert $ is bounded from the
dyadic Hardy space $H_{1}(\mathbb{I})$ to the space $L_{1}(\mathbb{I})$ (see
also Simon \cite{simon1985}). Fujii's theorem was extended by Weisz \cite%
{weisz1996cesaro}. In particular, Weisz \cite{weisz1996cesaro} proved that
the maximum operator is bounded from the Hardy space $H_{p}(\mathbb{I})$\ to
the space $L_{p}(\mathbb{I}),$ when $p>1/2$ . The essence of the condition $%
p>1/2$ was proved by the author \cite{GogiEJA}. If the $\{q_{k}\}$ is an
non-decreasing sequence then it can be proved that the following inequality
occurs (see Persson, Tephnadze, Wall \cite{PTW}),%
\begin{equation}
\sup\limits_{n}\left\vert t_{n}\right\vert \leq c\sup\limits_{n}\left\vert
\sigma _{n}\right\vert ,  \label{<}
\end{equation}%
where by $t_{n}$ is denoted Nörlund means of Walsh-Fourier series. From (\ref%
{<}) it follows that the maximum operator $\sup\limits_{n}\left\vert
t_{n}\right\vert $ is bounded from the Hardy space $H_{p}(\mathbb{I})$\ to
the space $L_{p}(\mathbb{I}),$ when $p>1/2$.

The situation is different when the sequence $\{q_{k}\}$ is decreasing. Let
us cite the following two cases:

\begin{itemize}
\item say $q_{k}=A_{k}^{\alpha -1},\alpha \in \left( 0,1\right) $, where 
\begin{equation*}
A_{0}^{\alpha }=1,\,\,A_{n}^{\alpha }=\frac{\left( \alpha +1\right) \cdots
\left( \alpha +n\right) }{n!}.
\end{equation*}%
then it is easy to see that $\{q_{k}\}$ is decreasing and at the same time
the operator $\sup\limits_{n}\left\vert \sigma _{n}^{\alpha }\right\vert
\,\,\,\,\left( 0<\alpha <1\right) $ is bounded from the Hardy space $H_{p}(%
\mathbb{I})$\ to the space $L_{p}(\mathbb{I}),$ when $p>1/\left( 1+\alpha
\right) $ (see Weisz \cite{weisz2001c});

\item Assume that $q_{k}=1/k$. Then the sequence is decreasing, but the
maximum operator is not bounded from the Hardy space $H_{p}(\mathbb{I})$\ to
the space $L_{p}(\mathbb{I})$ by any $p\in (0,1]$ (see \cite{GogiRendi}).
\end{itemize}

Therefore, Nörlund means with non-increasing weights can be divided into two
groups:

\begin{itemize}
\item Nörlund means with non-increasing weights, whose corresponding maximum
operator is bounded from the Hardy space $H_{p}(\mathbb{I})$\ to the space $%
L_{p}(\mathbb{I})$ for some $p\in (0,1]$;

\item Nörlund means with non-increasing weights that are not bounded from
the Hardy space $H_{p}(\mathbb{I})$\ to the space $L_{p}(\mathbb{I})$ by any 
$p\in (0,1]$.
\end{itemize}

The presented paper will be proved the necessary and sufficient conditions
in order maximal operator of Nörlund means with non-increasing weights to be
bounded from the Hardy space $H_{p}(\mathbb{I})$\ to the space $L_{p}(%
\mathbb{I})$. It also follows from the established theorem that the
boundedness of maximal operator of Nörlund means with non-increasing weights
from the Hardy space $H_{1}(\mathbb{I})$\ to the space $L_{1}(\mathbb{I})$
is equivalent to the type $\left( \infty ,\infty \right) $ .

\section{Walsh Functions}

We denote the set of non-negative integers by $\mathbb{N}$. By a dyadic
interval in $\mathbb{I}:=[0,1)$ we mean one of the form $I\left( l,k\right)
:=\left[ \frac{l-1}{2^{k}},\frac{l}{2^{k}}\right) $ for some $k\in \mathbb{N}
$, $0<l\leq 2^{k}$. Given $k\in \mathbb{N}$ and $x\in \lbrack 0,1),$ let $%
I_{k}(x)$ denote the dyadic interval of length $2^{-k}$ which contains the
point $x$. We also use the notation $I_{n}:=I_{n}\left( 0\right) \left( n\in 
\mathbb{N}\right) ,\overline{I}_{k}\left( x\right) :=\mathbb{I}\backslash
I_{k}\left( x\right) $. Let 
\begin{equation*}
x=\sum\limits_{n=0}^{\infty }x_{n}2^{-\left( n+1\right) }
\end{equation*}%
be the dyadic expansion of $x\in \mathbb{I}$, where $x_{n}=0$ or $1$ and if $%
x$ is a dyadic rational number we choose the expansion which terminates in $%
0^{\prime }$s.

For any given $n\in \mathbb{N}$ it is possible to write $n$ uniquely as%
\begin{equation*}
n=\sum\limits_{k=0}^{\infty }\varepsilon _{k}\left( n\right) 2^{k},
\end{equation*}%
where $\varepsilon _{k}\left( n\right) =0$ or $1$ for $k\in \mathbb{N}$.
This expression will be called the binary expansion of $n$ and the numbers $%
\varepsilon _{k}\left( n\right) $ will be called the binary coefficients of $%
n$. Let us introduce for $1\leq n\in \mathbb{N}$ the notation $\left\vert
n\right\vert :=\max \{j\in \mathbb{N}\mathbf{:}\varepsilon _{j}\left(
n\right) \neq 0\}$, that is $2^{\left\vert n\right\vert }\leq
n<2^{\left\vert n\right\vert +1}.$

Let us set the $n$th $\left( n\in \mathbb{N}\right) $ Walsh-Paley function
at point $x\in \mathbb{I}$ as:%
\begin{equation*}
w_{n}\left( x\right) =\left( -1\right) ^{\sum\limits_{j=0}^{\infty
}\varepsilon _{j}\left( n\right) x_{j}}.
\end{equation*}%
Let us denote the logical addition on $\mathbb{I}$ by $\dotplus $. That is,
for any $x,y\in \mathbb{I}$ 
\begin{equation*}
x\dotplus y:=\sum\limits_{n=0}^{\infty }\left\vert x_{n}-y_{n}\right\vert
2^{-\left( n+1\right) }.
\end{equation*}

The $n$th Walsh-Dirichlet kernel is defined by 
\begin{equation*}
D_{n}\left( x\right) =\sum\limits_{k=0}^{n-1}w_{k}\left( x\right) .
\end{equation*}

Recall that \cite{GES,SWSP} 
\begin{equation}
D_{2^{n}}\left( x\right) =2^{n}\mathbf{1}_{I_{n}}\left( x\right) ,
\label{Dir}
\end{equation}%
where $\mathbf{1}_{E}$ is the characteristic function of the set $E$.

As usual, denote by $L_{1}\left( \mathbb{I}\right) $ the set of measurable
functions defined on $\mathbb{I}$, for which 
\begin{equation*}
\left\Vert f\right\Vert _{1}:=\int\limits_{\mathbb{I}}\left\vert f\left(
t\right) \right\vert dt<\infty \text{.}
\end{equation*}%
Let $f\in L_{1}\left( \mathbb{I}\right) $. The partial sums of the
Walsh-Fourier series are defined as follows:

\begin{equation*}
S_{M}\left( f;x\right) :=\sum\limits_{i=0}^{M-1}\widehat{f}\left( i\right)
w_{i}\left( x\right) ,
\end{equation*}%
where the number 
\begin{equation*}
\widehat{f}\left( i\right) =\int\limits_{\mathbb{I}}f\left( t\right)
w_{i}\left( t\right) dt
\end{equation*}%
is said to be the $i$th Walsh-Fourier coefficient of the function\thinspace $%
f.$ Let us set $E_{n}\left( f; x\right) =S_{2^{n}}\left( f;x\right)$. The
maximal function is defined by 
\begin{equation*}
E^{\ast }\left( f;x\right) =\sup\limits_{n\in \mathbb{N}}E_{n}\left(
f;x\right) .
\end{equation*}

\section{Walsh-N\"orlund means}

Let us set $\{q_{k}:k\geq 0\}$ be a sequence of non-negative numbers. We
define the $n$th Nörlund mean of the Walsh-Fourier series by 
\begin{equation}
t_{n}(f;x):=\frac{1}{Q_{n}}\sum_{k=1}^{n}q_{n-k}S_{k}(f;x),
\label{Norlund-mean}
\end{equation}%
where $Q_{n}:=\sum_{k=0}^{n-1}q_{k}$ $(n\geq 1).$ It is always assumed that $%
q_{0}>0$ and $\lim_{n\rightarrow \infty }Q_{n}=\infty $. In this case, the
summability method generated by the sequence $\{q_{k}:k\geq 0\}$ is regular
(see \cite{Moricz-Siddiqi}) if and only if 
\begin{equation}
\lim_{n\rightarrow \infty }\frac{q_{n-1}}{Q_{n}}=0.  \label{reg2}
\end{equation}%
The Nörlund kernels are defined by 
\begin{equation*}
{F}_{n}(t):=\frac{1}{Q_{n}}\sum_{k=1}^{n}q_{n-k}D_{k}(t).
\end{equation*}%
The Fejér means and kernels are 
\begin{equation*}
\sigma _{n}(f,x):=\frac{1}{n}\sum_{k=1}^{n}S_{k}(f,x),\quad K_{n}(t):=\frac{1%
}{n}\sum_{k=1}^{n}D_{k}(t).
\end{equation*}%
It is easily seen that the means $t_{n}(f)$ and $\sigma _{n}(f)$ can be got
by convolution of $f$ with the kernels $F_{n}$ and $K_{n}$. That is, 
\begin{eqnarray*}
t_{n}(f,x) &=&\int_{G}f(x\dotplus t){\ F}_{n}(t)dt=\left( f\ast F_{n}\right)
\left( x\right) ,\quad \\
\sigma _{n}(f,x) &=&\int_{G}f(x\dotplus t)K_{n}(t)dt=\left( f\ast
K_{n}\right) \left( x\right) .
\end{eqnarray*}%
It is well-known that $L_{1}$ norm of Fejér kernels are uniformly bounded,
that is 
\begin{equation}
\Vert K_{n}\Vert _{1}\leq c\text{ for all }n\in \mathbb{N}\text{.}
\label{Fejer}
\end{equation}%
Yano estimated the value of $c$ and he have $c=2$ \cite{Yano1951}. Recently,
in paper (see \cite{Toledo-18}) it was shown that the exact value of $c$ is $%
\frac{17}{15}$.

\section{Auxiliary Propositions}

In order to prove main results we need the following theorems.

\begin{SWS}
\label{SWS} Let $n\in \mathbb{N}$ and $e_j:=2^{-j-1}$. Then 
\begin{equation}
K_{2^{n}}\left( x\right) =\frac{1}{2}\left( 2^{-n}D_{2^{n}}\left( x\right)
+\sum\limits_{j=0}^{n}2^{j-n}D_{2^{n}}\left( x\dotplus e_j \right) \right) .
\label{SW}
\end{equation}
\end{SWS}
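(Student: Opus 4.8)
The plan is to prove this closed-form identity for the dyadic Fejér kernel $K_{2^n}$ by induction on $n$. The key structural fact available to me is the Walsh-Dirichlet kernel identity $D_{2^n}(x)=2^n\mathbf{1}_{I_n}(x)$ from \eqref{Dir}, which turns every term on the right-hand side of \eqref{SW} into an explicit indicator function supported on a small dyadic interval or its translate by $e_j=2^{-j-1}$. My strategy is to reduce both sides of the claimed formula to piecewise-constant functions supported on the intervals $I_n(0)$ and the shifted intervals $I_n(e_j)$, and to check that the coefficients match.

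First I would record the standard recursion for the Dirichlet kernels, namely the factorization
\begin{equation*}
D_{2^{n+1}}(x)=D_{2^n}(x)+w_{2^n}(x)D_{2^n}(x),
\end{equation*}
together with the multiplicative behaviour of the Walsh functions $w_{2^n}$ on dyadic intervals, so that I can express $K_{2^{n+1}}$ in terms of $K_{2^n}$. Writing $2^{n+1}K_{2^{n+1}}=\sum_{k=1}^{2^{n+1}}D_k$ and splitting the sum at $k=2^n$, the lower block reproduces $2^nK_{2^n}$ and the upper block $\sum_{k=2^n+1}^{2^{n+1}}D_k$ can be handled using $D_{2^n+j}=D_{2^n}+w_{2^n}D_j$. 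This is the computational engine of the induction step.

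Alternatively, and perhaps more cleanly, I would verify \eqref{SW} directly by evaluating both sides pointwise using \eqref{Dir}. Since $D_{2^n}(x\dotplus e_j)=2^n\mathbf{1}_{I_n}(x\dotplus e_j)=2^n\mathbf{1}_{I_n(e_j)}(x)$, the right-hand side of \eqref{SW} becomes a sum of indicators weighted by $2^{j-n}$, supported on the disjoint translated intervals $I_n(e_j)$ for $j=0,\dots,n$ plus the term $2^{-n}D_{2^n}(x)$ on $I_n(0)$. On the other side, $K_{2^n}(x)$ is known to be constant on each dyadic interval of rank $n$ that arises here, and its value on the block where $x\dotplus e_j\in I_n$ can be computed from the defining average $\frac1{2^n}\sum_{k=1}^{2^n}D_k(x)$. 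The verification then amounts to checking, on each of the finitely many relevant dyadic subintervals determined by the first $n$ binary digits of $x$, that the average of the Dirichlet kernels produces exactly the coefficient $\tfrac12\,2^{j-n}$ predicted by the formula.

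The main obstacle I anticipate is the bookkeeping of supports: I must confirm that the translated intervals $I_n(e_j)$ for distinct $j$ are disjoint (which follows because $e_j$ flips precisely the $j$th binary digit, so $x\dotplus e_j\in I_n$ forces $x$ to have a prescribed pattern in its low-order bits) and that together with $I_n(0)$ they partition the region where $K_{2^n}$ is nonzero. Getting the factor $\tfrac12$ and the geometric weights $2^{j-n}$ to line up correctly — rather than off by one in the summation range or in the exponent — is the delicate point, and I would pin it down by checking the base case $n=0$ and $n=1$ explicitly before trusting the general induction.
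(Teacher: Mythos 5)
The paper does not actually prove this identity --- it is quoted as ``Theorem SWS'' with the single line ``The proof can be found in \cite{SWSP}'' --- so there is no internal argument to compare yours against. Judged on its own, your inductive route is the standard one and does close: from $2^{n+1}K_{2^{n+1}}=\sum_{k=1}^{2^{n}}D_{k}+\sum_{j=1}^{2^{n}}D_{2^{n}+j}$ and $D_{2^{n}+j}=D_{2^{n}}+w_{2^{n}}D_{j}$ one gets the recursion $2K_{2^{n+1}}=(1+w_{2^{n}})K_{2^{n}}+D_{2^{n}}$, and substituting the inductive hypothesis, using $(1+w_{2^{n}}(x))\mathbf{1}_{I_{n}}(x\dotplus e_{j})=2\,\mathbf{1}_{I_{n+1}}(x\dotplus e_{j})$ for $j<n$ together with $2D_{2^{n}}=D_{2^{n+1}}+D_{2^{n+1}}(\cdot\dotplus e_{n})$, reproduces the formula at level $n+1$; the base case $n=0$ gives $K_{1}=1$ on both sides.

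One concrete slip to fix in your bookkeeping: the intervals $I_{n}(e_{j})$, $j=0,\dots,n$, are \emph{not} pairwise disjoint, because $e_{n}=2^{-n-1}\in I_{n}$, so $I_{n}(e_{n})=I_{n}(0)$ and the $j=n$ term satisfies $D_{2^{n}}(x\dotplus e_{n})=D_{2^{n}}(x)$. The right-hand side of \eqref{SW} is therefore supported on the disjoint union $I_{n}\cup\bigcup_{j=0}^{n-1}I_{n}(e_{j})$, with the $j=n$ summand piling an extra $D_{2^{n}}(x)$ onto $I_{n}$ rather than occupying a new interval; your claimed ``partition into $n+2$ disjoint pieces'' would give the wrong value on $I_{n}$ if taken literally. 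Relatedly, your second (``direct pointwise'') strategy is weaker than you suggest: knowing that $K_{2^{n}}$ is constant on rank-$n$ dyadic intervals still leaves you to compute its value on each such interval, and the only clean way to do that is essentially the same recursion, so the induction is not an alternative but the actual engine. With the $j=n$ case handled separately as above, your first approach is a complete and correct proof.
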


The proof can be found in \cite{SWSP}.

\begin{GN1}
\label{lemma-decomp} Let $n=2^{n_{1}}+2^{n_{2}}+\cdots +2^{n_{r}}$ with $%
n_{1}>n_{2}>\cdots >n_{r}\geq 0$. Let us set $n^{(0)}:=n$ and $%
n^{(i)}:=n^{(i-1)}-2^{n_{i}}$ ($i=1.\ldots ,r-1$), $n^{(r)}:=0$. Then the
following decomposition holds. 
\begin{eqnarray}
{F}_{n} &=&\frac{w_{n}}{Q_{n}}%
\sum_{j=1}^{r}Q_{n^{(j-1)}}w_{2^{n_{j}}}D_{2^{n_{j}}}  \label{Fn1+Fn2} \\
&&-\frac{w_{n}}{Q_{n}}\sum_{j=1}^{r}w_{n^{(j-1)}}w_{2^{n_{j}}-1}%
\sum_{k=1}^{2^{n_{j}}-1}q_{k+n^{(j)}}D_{k}  \notag \\
&=&:F_{n,1}+F_{n,2}.  \notag
\end{eqnarray}
\end{GN1}

\begin{GN2}
\label{thm-norm} Let $\{q_k : k\in {\mathbb{N}}\}$ be a sequence of
non-negative numbers. If the sequence $\{q_{k}:k\in {\mathbb{N}}\}$ is
monotone non-increasing (in sign $q_{k}\downarrow $). Then 
\begin{equation}
\Vert F_{n}\Vert _{1}\sim \frac{1}{Q_{n}}\sum_{k=1}^{|n|}|\varepsilon
_{k}(n)-\varepsilon _{k+1}(n)|Q_{2^{k}}.  \label{norm-norlund-2}
\end{equation}
\end{GN2}

The proof of Theorems GN1 and GN2 can be found in \cite{goginava2022some}.

Using Abel's transformation we have 
\begin{eqnarray*}
\sum_{k=1}^{2^{n_{j}}-1}q_{k+n^{(j)}}D_{k} &=&\sum_{k=1}^{2^{n_{j}}-2}\left(
q_{k+n^{(j)}}-q_{k+n^{(j)}+1}\right) kK_{k} \\
&&+q_{n^{(j-1)}-1}{(2^{n_{j}}-1})K_{2^{n_{j}}-1}.
\end{eqnarray*}%
Thus, we get%
\begin{eqnarray}
F_{n,2} &=&\frac{w_{n}}{Q_{n}}\sum_{j=1}^{r}%
\sum_{k=1}^{2^{n_{j}}-2}w_{n^{(j-1)}}w_{2^{n_{j}}-1}\left(
q_{k+n^{(j)}}-q_{k+n^{(j)}+1}\right) kK_{k}  \label{F1+F2} \\
&&+\frac{w_{n}}{Q_{n}}%
\sum_{j=1}^{r}w_{n^{(j-1)}}w_{2^{n_{j}}-1}q_{n^{(j-1)}-1}{(2^{n_{j}}-1}%
)K_{2^{n_{j}}-1}  \notag \\
&=&:F_{n,2}^{\left( 1\right) }+F_{n,2}^{\left( 2\right) }.  \notag
\end{eqnarray}

\begin{lemma}
\label{gogi} \label{m(p>1/2)}Let $p\in \left( \frac{1}{2},1\right] $. Then%
\begin{equation*}
\int\limits_{\mathbb{I}}\sup\limits_{1\leq n\leq 2^{N}}\left( n\left\vert
K_{n}\right\vert \right) ^{p}\leq c_{p}2^{N\left( 2p-1\right) }.
\end{equation*}
\end{lemma}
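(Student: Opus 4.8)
The plan is to split $\mathbb{I}$ into the central interval $I_N$ and the dyadic rings $I_s\setminus I_{s+1}$ for $0\le s\le N-1$, and to treat $I_N$ as the main term. On $I_N$ every $x$ has $x_0=\cdots=x_{N-1}=0$, so $w_i(x)=1$ for all $i<2^N$ and hence $D_k(x)=k$ for $k\le 2^N$; consequently $nK_n(x)=\sum_{k=1}^n D_k(x)=n(n+1)/2$ is increasing in $n$, and $\sup_{1\le n\le 2^N}n|K_n(x)|\sim 2^{2N-1}$ for every $x\in I_N$. Since $|I_N|=2^{-N}$ this already contributes $\int_{I_N}(\sup_n n|K_n|)^p\sim 2^{-N}2^{(2N-1)p}\le c_p\,2^{N(2p-1)}$, which matches the asserted bound and shows it is sharp.

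On the rings I would refine by the position of the second nonzero dyadic coefficient of $x$. For $s+1\le t$ set $E_{s,t}:=\{x\in I_s\setminus I_{s+1}:x_{s+1}=\cdots=x_{t-1}=0,\ x_t=1\}$, so that $|E_{s,t}|=2^{-(t+1)}$ and the rings are, up to a null set, the disjoint union of the $E_{s,t}$. The heart of the argument is the pointwise estimate $\sup_{1\le n\le 2^N}n|K_n(x)|\le c\,2^{s+\min(t,N)}$ for $x\in E_{s,t}$. I would obtain this from Theorem SWS: on the ring $I_s\setminus I_{s+1}$ one has $D_{2^m}(x)=0$ for $m>s$ by \eqref{Dir}, and in the expansion \eqref{SW} of $K_{2^m}$ only the term $j=s$ survives, and only when $m\le t$, i.e. while $x_{s+1}=\cdots=x_{m-1}=0$, giving $2^m|K_{2^m}(x)|=2^{s-1+m}$ for $s<m\le t$. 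Passing from the dyadic indices $2^m$ to arbitrary $n$ by the identity $D_{2^m+k}=D_{2^m}+w_{2^m}D_k$ and summing the resulting blocks then shows that the partial sums $\sum_{k=1}^n D_k(x)$ never exceed $c\,2^{s+\min(t,N)}$; the sign $w_{2^t}(x)=-1$ at level $t$ is what halts the growth beyond $\min(t,N)$.

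It then remains to integrate and sum the geometric series. On $E_{s,t}$ one gets $\int_{E_{s,t}}(\sup_n n|K_n|)^p\le 2^{-(t+1)}\bigl(c\,2^{s+\min(t,N)}\bigr)^p$. Summing first over $t\ge s+1$ and then over $0\le s\le N-1$, and splitting into $t\le N$ and $t>N$, the hypothesis $p\le1$ makes $\sum_t 2^{t(p-1)}$ converge (dominated by its smallest index) while $p>\tfrac12$ makes $\sum_s 2^{s(2p-1)}$ grow (dominated by $s=N-1$); both contributions come out as $c_p\,2^{N(2p-1)}$, of the same order as the central term.

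The main obstacle is precisely the uniform pointwise estimate on $E_{s,t}$ for \emph{non-dyadic} $n$: one must control $\sup_n\bigl|\sum_{k=1}^n D_k(x)\bigr|$ over all $n\le 2^N$, not only the dyadic values, and in particular verify that the cancellation produced by $w_{2^t}(x)=-1$ keeps the maximal partial sum at the level-$\min(t,N)$ size. Carrying this out rigorously demands the careful bookkeeping with \eqref{SW} and the Dirichlet recursion indicated above; everything else reduces to the elementary geometric summation. Note finally that the exponent $2p-1$ originates entirely from the sharp behaviour on $I_N$, consistent with the heuristic $n|K_n|\approx n^2$ near the origin, which also explains why the crude ring bound $n|K_n(x)|\lesssim 2^{N+s}$ (ignoring $t$) is too weak and the refinement by $t$ is essential.
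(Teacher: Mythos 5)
Your argument is correct and reaches the stated bound, but it takes a genuinely different, and considerably longer, route than the paper's. The paper disposes of the lemma in a few lines: it quotes the pointwise domination $n|K_n|\le c\sum_{m=0}^{|n|}2^mK_{2^m}$ (inequality \eqref{fest}, cited from \cite{SWSP}), applies $(\sum_k a_k)^p\le\sum_k a_k^p$, and then uses $\Vert K_{2^m}\Vert_1\le c$ for $p=1$ and the known quasi-norm estimate $\int_{\mathbb{I}}(2^mK_{2^m})^p\le c_p2^{m(2p-1)}$ from \cite{GogiJMAA} for $1/2<p<1$; the geometric sum over $m\le N$ gives $2^{N(2p-1)}$. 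Your partition of $\mathbb{I}$ into $I_N$ and the sets $E_{s,t}$ in effect re-proves that quasi-norm estimate from scratch, which is why you run into the ``main obstacle'' of controlling non-dyadic $n$. You can remove that obstacle exactly as the paper does: take \eqref{fest} as known, and note that for $x\in E_{s,t}$ formula \eqref{SW} gives $K_{2^m}(x)\le c2^{m}$ for $m\le s$, $K_{2^m}(x)=2^{s-1}$ for $s<m\le t$, and $K_{2^m}(x)=0$ for $m>t$; your pointwise bound $\sup_{1\le n\le 2^N}n|K_n(x)|\le c\,2^{s+\min(t,N)}$ then follows at once, with no bookkeeping over non-dyadic indices. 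Your integration and summation are correct, with one small caveat at the endpoint $p=1$: there $\sum_{t=s+1}^{N}2^{t(p-1)}$ is not dominated by its smallest term but equals $N-s$, and the extra factor is still absorbed because $\sum_{s=0}^{N-1}2^{s}(N-s)\le c2^{N}$. What your approach buys is self-containedness and the explicit sharpness of the exponent on $I_N$; what the paper's buys is brevity, at the price of two external citations.
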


\begin{proof}[Proof of Lemma \protect\ref{gogi}]
Let $p=1$. Since (see \cite{SWSP}) 
\begin{equation}
n\left\vert K_{n}\left( x\right) \right\vert \leq
c\sum\limits_{s=0}^{|n|}2^{s}K_{2^{s}}\left( x\right)  \label{fest}
\end{equation}%
from (\ref{Fejer}) we have \newline
\begin{equation*}
\int\limits_{\mathbb{I}}\sup\limits_{1\leq n\leq 2^{N}}\left( n\left\vert
K_{n}\left( x\right) \right\vert \right) dx\leq
c\sum\limits_{s=0}^{N}2^{s}\int\limits_{\mathbb{I}}K_{2^{s}}\left( x\right)
dx\leq c2^{N}.
\end{equation*}

Let \thinspace $1/2<p<1$. Applying the inequality 
\begin{equation*}
\left( \sum\limits_{k=0}^{\infty }a_{k}\right) ^{p}\leq
\sum\limits_{k=0}^{\infty }a_{k}^{p}\,\,\,\,\,\,\,\,\,\,\left( a_{k}\geq
0,\,\,\,\,0<p\leq 1\right)
\end{equation*}
and (see \cite{GogiJMAA}) 
\begin{equation*}
\int\limits_{\mathbb{I}}\left( 2^{s}K_{2^{s}}\left( x\right) \right)
^{p}dx\leq c_{p}2^{s\left( 2p-1\right) },1/2<p<1
\end{equation*}
we get 
\begin{equation*}
\int\limits_{\mathbb{I}}\sup\limits_{1\leq n\leq 2^{N}}\left( n\left\vert
K_{n}\left( x\right) \right\vert \right) ^{p}dx\leq
c_{p}\sum\limits_{s=0}^{N}\int\limits_{\mathbb{I}}\left(
2^{s}K_{2^{s}}\left( x\right) \right) ^{p}dx\leq c_{p}2^{N\left( 2p-1\right)
}.
\end{equation*}

Lemma \ref{gogi} is proved.
\end{proof}

\section{Dyadic Hardy Spaces}

The norm (or quasinorm) of the space $L_{p}\left( \mathbb{I}\right) $ is
defined by 
\begin{equation*}
\left\Vert f\right\Vert _{p}:=\left( \int\limits_{\mathbb{I}}\left\vert
f\left( x\right) \right\vert ^{p}dx\right) ^{1/p}\,\,\,\,\left( 0<p<+\infty
\right) .
\end{equation*}

In case $p=\infty$, by $L_{p}(\mathbb{I})$ we mean $L_{\infty}(\mathbb{I})$,
endoved with the supremum norm.

The space weak-$L_{1}\left( \mathbb{I}\right) $ consists of all measurable
functions $f$ for which 
\begin{equation*}
\left\Vert f\right\Vert _{\text{weak}-L_{1}\left( \mathbb{I}\right)
}:=\sup\limits_{\lambda >0}\lambda \left\vert \left( \left\vert f\right\vert
>\lambda \right) \right\vert <+\infty .
\end{equation*}

Let $f\in L_{1}\left( \mathbb{I}\right) $. \ For $0<p<\infty $ the Hardy
space $H_{p}(\mathbb{I})$ consists all functions for which

\begin{equation*}
\left\Vert f\right\Vert _{H_{p}}:=\left\Vert E^{\ast }\left( f\right)
\right\Vert _{p}<\infty .
\end{equation*}

A bounded measurable function $a$ is a p-atom, if there exists a
dyadic\thinspace in\-ter\-val $I$, such that

a) $\int\limits_{I}a=0$;

b) $\left\Vert a\right\Vert _{\infty }\leq \left\vert I\right\vert ^{-1/p}$;

c) supp $a\subset I$.

An operator $T$ be called p-quasi-local if there exist a constant $c_{p}>0$
such that for every p-atom $a$

\begin{equation*}
\int\limits_{\mathbb{I}\backslash I}|Ta|^{p}\leq c_{p}<\infty ,
\end{equation*}%
where $I$ is the support of the atom. We shall need the following

\begin{W1}
\label{Weisz} Suppose that the operator $T$ \thinspace is\thinspace $\sigma $%
-sublinear and $p$-quasi-local for each $0<p\leq 1$. If $T$ is bounded from $%
L_{\infty }(\mathbb{I})$ to $L_{\infty }(\mathbb{I})$, then

\begin{equation*}
\left\Vert Tf\right\Vert _{p}\leq c_{p}\left\Vert f\right\Vert
_{p}\,\,\,\,\,\,\,\,\,\,\,(f\in H_{p}\left( \mathbb{I}\right) )
\end{equation*}%
for every $0<p<\infty $ $.\,$In particular for $f\in L_{1}(\mathbb{I})$, it
holds

\begin{equation*}
\left\Vert Tf\right\Vert _{weak\_L_{1}(\mathbb{I})}\leq C\left\Vert
f\right\Vert _{1}.
\end{equation*}
\end{W1}

\begin{W2}
\label{interpolation} If a sublinear operator is bounded from $H_{p_{0}}(%
\mathbb{I})$ to $L_{p_{0}(\mathbb{I})}$ and from $L_{p_{1}}(\mathbb{I})$ to $%
L_{p_{1}}\left( \mathbb{I}\right) \left( p_{0}\leq 1<p_{1}\leq \infty
\right) $ then it is also bounded from $H_{p}(\mathbb{I})$ to $L_{p}(\mathbb{%
I})$ if $p_{0}<p<p_{1}$.
\end{W2}

The proofs of Theorems W1 and W2 can be found in \cite{Weisz-book1}.

\section{\protect\bigskip Maximal Operators of Walsh-Nörlund means}

The goal of this paragraph is to investigate the boundedness of the maximal
operators of the Walsh-Nörlund means on the dyadic Hardy spaces. More
precisely, to find the necessary and sufficient conditions for the maximal
operator of the Walsh-Nörlund means to be bounded from the Hardy space $%
H_{p}(\mathbb{I})$ to the space $L_{p}(\mathbb{I})$ for fixed $p\in (0,1]$.

Let us first prove that if the condition 
\begin{equation}
\sup\limits_{n\in \mathbb{N}}\frac{1}{Q_{n}}\sum_{k=1}^{|n|}|\varepsilon
_{k}(n)-\varepsilon _{k+1}(n)|Q_{2^{k}}=\infty  \label{infinity}
\end{equation}
is fulfilled, then the boundedness of the maximum operator from the Hardy
space $H_{1}(\mathbb{I})$ to the space $L_{1}(\mathbb{I})$ does not occur.
Moreover, we prove that the following is valid

\begin{theorem}
\label{nobound}Let $\left\{ m_{A}:A\in \mathbb{N}\right\} $ be a subsequence
for which the condition%
\begin{equation*}
\sup\limits_{A\in \mathbb{N}}\frac{1}{Q_{m_{A}}}\sum_{k=1}^{|m_{A}|}|%
\varepsilon _{k}(m_{A})-\varepsilon _{k+1}(m_{A})|Q_{2^{k}}=\infty
\end{equation*}%
holds.$\ $\ The operator $t_{m_{A}}\left( f\right) $ is not uniformly
bounded from the dyadic Hardy spaces $H_{1}\left( \mathbb{I}\right) $ to the
space $L_{1}\left( \mathbb{I}\right) $.
\end{theorem}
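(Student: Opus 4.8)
The plan is to exhibit, for the given subsequence $\{m_A\}$, a family of test functions (atoms or sums of characters) on which $t_{m_A}$ fails to be uniformly controlled in the $H_1\to L_1$ norm. Since Theorem GN2 tells us that $\|F_n\|_1 \sim \frac{1}{Q_n}\sum_{k=1}^{|n|}|\varepsilon_k(n)-\varepsilon_{k+1}(n)|Q_{2^k}$ and the hypothesis forces this quantity to be unbounded along $\{m_A\}$, the natural strategy is to convert the blow-up of the kernel $L_1$-norm into a failure of the operator bound. The cleanest route is to test against the Dirichlet-type functions: I would use the fact that for a character $w_i$ one has $\|w_i\|_{H_1}\le c$ (indeed $E^\ast(w_i)$ is controlled since the maximal dyadic partial sums of a single Walsh function are uniformly bounded), and then show that $\|t_{m_A}(w_i)\|_1$ can be made comparable to $\|F_{m_A}\|_1$ for a suitable choice of $i=i(A)$.

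The key steps, in order: First I would recall that $t_n(f)=f\ast F_n$ and that testing on a single character $w_i$ extracts a single Walsh-Fourier coefficient of the kernel, so $t_{m_A}(w_i;x)=\widehat{F}_{m_A}(i)\,w_i(x)$ in absolute value, which by itself is too weak. So instead I would build a function whose Walsh-Fourier transform reconstructs a large portion of $F_{m_A}$ on its support; concretely, take $f_A$ to be (a normalized version of) the function whose convolution with $F_{m_A}$ returns $F_{m_A}$ localized away from the origin, e.g.\ a $1$-atom $a_A$ supported on a small dyadic interval $I_{|m_A|+1}$ with $\int a_A=0$ and $\|a_A\|_\infty\le |I|^{-1}$. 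Second, I would estimate $\int_{\mathbb I\setminus I}|t_{m_A}(a_A)|$ from below: using the decomposition of Theorem GN1 (or directly the kernel), the dominant term $F_{n,1}$ built from $Q_{n^{(j-1)}}D_{2^{n_j}}/Q_n$ carries exactly the weighted sum $\sum_k|\varepsilon_k-\varepsilon_{k+1}|Q_{2^k}/Q_n$, and the Dirichlet kernels $D_{2^{n_j}}=2^{n_j}\mathbf 1_{I_{n_j}}$ are non-negative and well separated, so their contributions cannot cancel and sum in absolute value to something $\gtrsim \|F_{m_A}\|_1$. Third, I would conclude that $\|t_{m_A}(a_A)\|_1\ge c\,\|F_{m_A}\|_1\to\infty$ while $\|a_A\|_{H_1}\le c$, contradicting uniform boundedness.

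The main obstacle I anticipate is the second step: arranging the atom (or elementary function) so that the convolution $a_A\ast F_{m_A}$ does not suffer cancellation and genuinely reproduces the large $L_1$-mass of the kernel on $\mathbb I\setminus I$. The danger is that $F_{n,2}$ and the oscillatory factors $w_{n^{(j-1)}}w_{2^{n_j}-1}$ in the GN1 decomposition interfere destructively; I would handle this by choosing the support scale of the atom to match $2^{-|m_A|}$ so that, after convolution, the Fej\'er-type pieces $K_k$ in $F_{n,2}^{(1)}$ and $F_{n,2}^{(2)}$ are controlled by Lemma~\ref{gogi} and are negligible relative to the main term $F_{n,1}$, which is exactly the piece whose $L_1$-norm realizes the GN2 expression. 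Verifying this separation of main term from error term — i.e.\ that $\|F_{n,1}\|_1\gtrsim \|F_n\|_1$ and the convolution preserves this lower bound off the atom's support — is the crux, and the non-negativity and disjoint supports of the dyadic blocks $D_{2^{n_j}}$ are what make it go through.
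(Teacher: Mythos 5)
Your overall strategy --- transfer the blow-up of the kernel $L_1$-norms given by Theorem GN2 to the operator through a uniformly bounded family of $H_1$ test functions --- is the right one, and it is also the paper's strategy; but the test function you propose makes the argument vanish identically at its central step. An atom $a_A$ supported on $I_{|m_A|+1}$ with $\int a_A=0$ has $\widehat{a}_A(i)=0$ for every $i<2^{|m_A|+1}$, since each such $w_i$ is constant (equal to $1$) on $I_{|m_A|+1}$; on the other hand $F_{m_A}$ is a Walsh polynomial with spectrum contained in $\{0,1,\dots,m_A-1\}\subset\{0,1,\dots,2^{|m_A|+1}-1\}$. Hence $t_{m_A}(a_A)=a_A\ast F_{m_A}\equiv 0$, and the atom detects nothing. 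The deeper problem is that the piece you designate as the surviving main term is exactly the piece that mean-zero atoms annihilate: for $t$ in the atom's support the functions $D_{2^{n_j}}(x\dotplus t)$ (all $n_j\le|m_A|$) are constant in $t$, so $\int a_A(t)D_{2^{n_j}}(x\dotplus t)\,dt=0$. This cancellation is precisely what makes $t^{\ast}$ quasi-local in the sufficiency half of Theorem \ref{main}; a counterexample must live in the part of the kernel that the moment condition does \emph{not} kill, namely the top spectral block $[2^{|m_A|},m_A)$.

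The paper's proof does exactly this. It takes $f_A:=D_{2^{|m_A|+1}}-D_{2^{|m_A|}}$, which is (a constant multiple of) a $1$-atom on $I_{|m_A|}$ with $\|f_A\|_{H_1}=1$ and with Walsh spectrum equal to the whole block $[2^{|m_A|},2^{|m_A|+1})$. Then $S_k(f_A)=0$ for $k\le 2^{|m_A|}$ and $S_k(f_A)=D_k-D_{2^{|m_A|}}$ for $2^{|m_A|}<k\le m_A$, so that, writing $m_A=2^{|m_A|}+q_A$, one gets the exact identity $t_{m_A}(f_A)=\frac{w_{2^{|m_A|}}}{Q_{m_A}}\sum_{k=1}^{q_A}q_{q_A-k}D_k$, a modulated copy of the lower-order N\"orlund kernel $F_{q_A}$ (times $Q_{q_A}/Q_{m_A}$). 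Its $L_1$-norm is then read off from (\ref{norm-norlund-2}) applied to $q_A$ and is unbounded under the hypothesis; no lower bound separating $F_{n,1}$ from $F_{n,2}$ --- the step you yourself identify as the crux and leave unproved --- is needed anywhere. To repair your argument you would have to move the atom to scale $2^{-|m_A|}$, choose it so that its Fourier coefficients do not degenerate on $[2^{|m_A|},m_A)$ (the block $D_{2^{|m_A|+1}}-D_{2^{|m_A|}}$ is the canonical choice), and apply Theorem GN2 to $F_{q_A}$ rather than to $F_{m_A}$.
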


\begin{proof}[Proof of Theorem \protect\ref{nobound}]
Set%
\begin{equation*}
f_{A}:=D_{2^{\left\vert m_{A}\right\vert +1}}-D_{2^{\left\vert
m_{A}\right\vert }}.
\end{equation*}%
Then it is easy to see that%
\begin{equation*}
\sup\limits_{n\in \mathbb{N}}\left\vert S_{2^{n}}\left( f_{A}\right)
\right\vert =D_{2^{\left\vert m_{A}\right\vert }}
\end{equation*}%
and consequently,%
\begin{equation*}
\left\Vert f_{A}\right\Vert _{H_{1}}=\left\Vert \sup\limits_{n\in \mathbb{N}%
}\left\vert S_{2^{n}}\left( f_{A}\right) \right\vert \right\Vert
_{1}=\left\Vert D_{2^{\left\vert m_{A}\right\vert }}\right\Vert _{1}=1.
\end{equation*}%
Set 
\begin{equation*}
m_{A}=2^{\left\vert m_{A}\right\vert }+q_{A},
\end{equation*}%
where 
\begin{equation*}
q_{A}:=\sum\limits_{j=0}^{\left\vert m_{A}\right\vert -1}\varepsilon
_{j}\left( m_{A}\right) 2^{j}.
\end{equation*}%
Then we can write%
\begin{equation*}
t_{m_{A}}\left( f_{A}\right) =\frac{1}{Q_{m_{A}}}\sum\limits_{k=2^{\left%
\vert m_{A}\right\vert }+1}^{2^{\left\vert m_{A}\right\vert
}+q_{A}-1}q_{m_{A}-k}S_{k}\left( f_{A}\right) .
\end{equation*}%
It is easy to see that%
\begin{eqnarray*}
S_{k}\left( f_{A}\right) &=&S_{k}\left( D_{2^{\left\vert m_{A}\right\vert
+1}}-D_{2^{\left\vert m_{A}\right\vert }}\right) \\
&=&S_{2^{\left\vert m_{A}\right\vert +1}}\left( D_{k}\right) -S_{k}\left(
D_{2^{\left\vert m_{A}\right\vert }}\right) \\
&=&D_{k}-D_{2^{\left\vert m_{A}\right\vert }},2^{|m_{A}|}<k\leq m_{A}\text{.}
\end{eqnarray*}%
Hence, we have%
\begin{eqnarray*}
t_{m_{A}}\left( f_{A}\right) &=&\frac{1}{Q_{m_{A}}}\sum\limits_{k=2^{\left%
\vert m_{A}\right\vert }+1}^{2^{\left\vert m_{A}\right\vert
}+q_{A}}q_{m_{A}-k}\left( D_{k}-D_{2^{\left\vert m_{A}\right\vert }}\right)
\\
&=&\frac{1}{Q_{m_{A}}}\sum\limits_{k=1}^{q_{A}}q_{q_{A}-k}\left(
D_{k+2^{\left\vert m_{A}\right\vert }}-D_{2^{\left\vert m_{A}\right\vert
}}\right) \\
&=&\frac{w_{2^{\left\vert m_{A}\right\vert }}}{Q_{m_{A}}}\sum%
\limits_{k=1}^{q_{A}}q_{q_{A}-k}D_{k}.
\end{eqnarray*}%
From the condition of Theorem \ref{nobound} and by (\ref{norm-norlund-2}) we
conclude that%
\begin{equation*}
\sup\limits_{A\in \mathbb{N}}\left\Vert t_{m_{A}}\left( f_{A}\right)
\right\Vert _{1}=\sup\limits_{A\in \mathbb{N}}\left\Vert
F_{q_{A}}\right\Vert _{1}=\infty .
\end{equation*}%
Theorem \ref{nobound} is proved.
\end{proof}

Now, we prove that the maximal operator of Walsh-Nörlund means with
non-increasing weights can not be bounded from the Hardy space $%
H_{1/2}\left( \mathbb{I}\right) $ to the space $L_{1/2}\left( \mathbb{I}%
\right) $. Based on the interpolation Theorem W2, the maximum operator of
Walsh-Nörlund means with non-increasing weights can not be bounded from the
Hardy space $H_{p}\left( \mathbb{I}\right) $ to the space $L_{p}\left( 
\mathbb{I}\right) $ when $p<1/2$ (see Persson, Tephnadze, Wall \cite{PTW}).

\begin{theorem}
\label{p<1/2} The maximal operator of Walsh-Nörlund means with
non-increasing weights can not be bounded from the Hardy space $%
H_{1/2}\left( \mathbb{I}\right) $ to the space $L_{1/2}\left( \mathbb{I}%
\right) $.
\end{theorem}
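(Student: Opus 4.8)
The plan is to argue by contradiction via Weisz's atomic machinery. If the maximal operator $t^{\ast}:=\sup_{n}|t_{n}|$ were bounded from $H_{1/2}(\mathbb{I})$ to $L_{1/2}(\mathbb{I})$ with norm $M$, then, since every $1/2$-atom $a$ with support $I$ satisfies $\Vert a\Vert_{H_{1/2}}\le 1$, we would get $\int_{\mathbb{I}}|t^{\ast}a|^{1/2}=\Vert t^{\ast}a\Vert_{1/2}^{1/2}\le M$, and in particular the uniform quasi-local bound $\int_{\mathbb{I}\setminus I}|t^{\ast}a|^{1/2}\le M$. So it suffices to exhibit atoms violating it. I would take
\[
a_{N}:=2^{N}\bigl(D_{2^{N+1}}-D_{2^{N}}\bigr),
\]
which by the identity $D_{2^{N+1}}-D_{2^{N}}=w_{2^{N}}D_{2^{N}}=w_{2^{N}}2^{N}\mathbf{1}_{I_{N}}$ is supported on $I_{N}$, has vanishing integral, and satisfies $\Vert a_{N}\Vert_{\infty}=2^{2N}=|I_{N}|^{-2}$; hence $a_{N}$ is a genuine $1/2$-atom.

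Next I would compute the means explicitly. Using $S_{k}(a_{N})=2^{N}(D_{k}-D_{2^{N}})$ for $2^{N}\le k\le 2^{N+1}$ together with the shift identity $D_{2^{N}+k}-D_{2^{N}}=w_{2^{N}}D_{k}$, a direct calculation gives, for $0<m\le 2^{N}$,
\[
t_{2^{N}+m}(a_{N})=\frac{2^{N}Q_{m}}{Q_{2^{N}+m}}\,w_{2^{N}}\,F_{m},
\]
so that $t^{\ast}a_{N}\ge 2^{N}\sup_{0<m\le 2^{N}}\frac{Q_{m}}{Q_{2^{N}+m}}|F_{m}|$. Since the weights are non-increasing we have $Q_{2^{N}+m}\le Q_{2^{N+1}}\le 2Q_{2^{N}}$, so it remains to bound $\int_{\mathbb{I}\setminus I_{N}}(t^{\ast}a_{N})^{1/2}$ from below by a quantity tending to infinity with $N$.

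For the kernel lower bound I would restrict the supremum to scales $m=2^{s}$ and invoke Theorem GN1 with the Abel transform (\ref{F1+F2}): on $\mathbb{I}\setminus I_{s}$ one has $D_{2^{s}}=0$, whence
\[
Q_{2^{s}}\,|F_{2^{s}}|=\Bigl|\sum_{k=1}^{2^{s}-2}(q_{k}-q_{k+1})kK_{k}+q_{2^{s}-1}(2^{s}-1)K_{2^{s}-1}\Bigr|,
\]
a combination of Fejér kernels with non-negative coefficients whose total mass equals $Q_{2^{s}}-q_{0}$. Using Theorem SWS to locate the coherent largeness of the $K_{k}$ near the dyadic rationals $e_{j}$, I would show that for $s$ sufficiently larger than $j$ a fixed proportion of this mass sits at scales where $K_{k}(x)\gtrsim 2^{j}$, yielding the uniform-in-weights estimate $|F_{2^{s}}(x)|\gtrsim 2^{j}$ on dyadic shells of measure $\sim 2^{-s}$ around $e_{j}$. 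Feeding this into the displayed lower bound for $t^{\ast}a_{N}$ and summing the shell contributions over $s$ and over $j$ then produces the critical divergence $\int_{\mathbb{I}\setminus I_{N}}(t^{\ast}a_{N})^{1/2}\gtrsim cN$, contradicting the quasi-local bound. The case $p<1/2$ follows from this endpoint by the interpolation Theorem W2, exactly as indicated before the statement.

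The main obstacle is the uniform (over all non-increasing weight sequences) pointwise lower bound for the maximal kernel $\sup_{m}(Q_{m}/Q_{2^{N}+m})|F_{m}|$ near the dyadic rationals, together with the bookkeeping of the normalisation factor $Q_{2^{s}}/Q_{2^{N+1}}$. This factor penalises the small scales $s\ll N$, and it is precisely the balance between this penalty and the Fejér size $2^{j}$, summed across all scales and all $e_{j}$, that diverges exactly at the endpoint $p=1/2$ (and would converge for $p>1/2$, in accordance with Lemma \ref{gogi}); verifying that the positive Abel combination above captures a constant fraction of the total mass $Q_{2^{s}}-q_{0}$ uniformly in the weights is the delicate technical heart of the argument.
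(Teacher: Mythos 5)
Your setup coincides with the paper's: the atom $a_N=2^N\bigl(D_{2^{N+1}}-D_{2^N}\bigr)$ (the paper works with the unnormalised $f_n=D_{2^{n+1}}-D_{2^n}$ and divides by $\Vert f_n\Vert_{H_{1/2}}$ at the end), the reduction to a quasi-local bound, and the identity $t_{2^N+m}(a_N)=\frac{2^NQ_m}{Q_{2^N+m}}w_{2^N}F_m$ are all correct and match the paper's computation. The gap is in how you bound the kernel from below. You propose to prove, uniformly over all non-increasing weights, that $|F_{2^s}(x)|\gtrsim 2^j$ on shells of measure $\sim 2^{-s}$ around each $e_j$, by arguing that a fixed proportion of the mass of the Abel combination $\sum_k(q_k-q_{k+1})kK_k+q_{2^s-1}(2^s-1)K_{2^s-1}$ sits at scales where $K_k(x)\gtrsim 2^j$. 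This step is never carried out, and as stated it does not follow from mass accounting: the Walsh--Fej\'er kernels $K_k$ are not non-negative, so a combination with non-negative coefficients of total mass $Q_{2^s}-q_0$ can be small, or change sign, at a given point even where individual $|K_k|$ are large; and the distribution of that mass over the scales $k$ is dictated by the weights themselves --- for $q_k\equiv 1$ the entire mass sits on the single term $(2^s-1)K_{2^s-1}$, about which Theorem SWS says nothing --- so ``a constant fraction at good scales'' cannot be secured uniformly without a pointwise \emph{positive} lower bound for every relevant $kK_k$ on the chosen set. That is exactly the estimate you defer as the ``delicate technical heart,'' and it is the part of the proof that is missing.

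The paper's own argument avoids all of this by testing near $0$ rather than near the points $e_j$: for $x\in I_s\setminus I_{s+1}$ one has $w_k(x)=1$ for all $k<2^s$, hence $D_j(x)=j$ for $j\le 2^s$ and
\[
\Bigl|\sum_{j=1}^{2^s}q_{2^s-j}D_j(x)\Bigr|=\sum_{j=1}^{2^s}q_{2^s-j}\,j\ \ge\ 2^{s-1}\sum_{j=2^{s-1}+1}^{2^s}q_{2^s-j}\ =\ 2^{s-1}Q_{2^{s-1}},
\]
with no cancellation and no Fej\'er-kernel analysis at all. Combining this with $Q_{2^s}/2^s\ge Q_{2^n}/2^n$ and summing the contributions of $I_s\setminus I_{s+1}$ (measure $2^{-s-1}$) over $s=0,\dots,n-1$ gives $\Vert t^{\ast}f_n\Vert_{1/2}^{1/2}\ge cn2^{-n/2}=cn\Vert f_n\Vert_{H_{1/2}}^{1/2}$ directly. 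I recommend you replace the lower bound near the $e_j$ by this evaluation on $I_s\setminus I_{s+1}$; your bookkeeping of the normalising factor $Q_{2^s}/Q_{2^N+2^s}$ then closes the argument exactly as you describe, and the deduction for $p<1/2$ via Theorem W2 is fine.
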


\begin{proof}[Proof of Theorem \protect\ref{p<1/2}]
Set%
\begin{equation*}
f_{n}:=D_{2^{n+1}}-D_{2^{n}}.
\end{equation*}%
Then it is easy to see that%
\begin{equation*}
\sup\limits_{m\in \mathbb{N}}\left\vert S_{2^{m}}\left( f_{n}\right)
\right\vert =D_{2^{n}}
\end{equation*}%
and consequently,%
\begin{equation}
\left\Vert f_{n}\right\Vert _{H_{p}}=\left\Vert \sup\limits_{m\in \mathbb{N}%
}\left\vert S_{2^{m}}\left( f_{n}\right) \right\vert \right\Vert
_{p}=\left\Vert D_{2^{n}}\right\Vert _{p}=2^{n\left( 1-1/p\right) }.
\label{f}
\end{equation}%
Let $s<n$. Then we can write 
\begin{eqnarray*}
t_{2^{n}+2^{s}}\left( f_{n}\right) &=&\frac{1}{Q_{2^{n}+2^{s}}}%
\sum\limits_{j=1}^{2^{n}+2^{s}}q_{2^{n}+2^{s}-j}S_{j}\left( f_{n}\right) \\
&=&\frac{1}{Q_{2^{n}+2^{s}}}\sum%
\limits_{j=2^{n}+1}^{2^{n}+2^{s}}q_{2^{n}+2^{s}-j}S_{j}\left(
D_{2^{n+1}}-D_{2^{n}}\right) \\
&=&\frac{1}{Q_{2^{n}+2^{s}}}\sum%
\limits_{j=2^{n}+1}^{2^{n}+2^{s}}q_{2^{n}+2^{s}-j}\left( S_{2^{n+1}}\left(
D_{j}\right) -S_{j}\left( D_{2^{n}}\right) \right) \\
&=&\frac{1}{Q_{2^{n}+2^{s}}}\sum%
\limits_{j=2^{n}+1}^{2^{n}+2^{s}}q_{2^{n}+2^{s}-j}\left(
D_{j}-D_{2^{n}}\right) \\
&=&\frac{1}{Q_{2^{n}+2^{s}}}\sum\limits_{j=1}^{2^{s}}q_{2^{s}-j}\left(
D_{j+2^{n}}-D_{2^{n}}\right) \\
&=&\frac{w_{2^{n}}}{Q_{2^{n}+2^{s}}}\sum%
\limits_{j=1}^{2^{s}}q_{2^{s}-j}D_{j}.
\end{eqnarray*}%
Consequently,%
\begin{eqnarray}
&&\int\limits_{\mathbb{I}}\left( \sup\limits_{1\leq s<n}\left\vert
t_{2^{n}+2^{s}}\left( f_{n}\right) \right\vert \right) ^{p}  \label{Q} \\
&\geq &\sum\limits_{s=0}^{n-1}\int\limits_{I_{s}\backslash
I_{s+1}}\left\vert t_{2^{n}+2^{s}}\left( f_{n}\right) \right\vert ^{p} 
\notag \\
&=&\sum\limits_{s=0}^{n-1}\int\limits_{I_{s}\backslash I_{s+1}}\frac{1}{%
Q_{2^{n}+2^{s}}^{p}}\left\vert
\sum\limits_{j=1}^{2^{s}}q_{2^{s}-j}D_{j}\right\vert ^{p}  \notag \\
&=&\sum\limits_{s=0}^{n-1}\frac{1}{2^{s+1}Q_{2^{n}+2^{s}}^{p}}\left\vert
\sum\limits_{j=1}^{2^{s}}q_{2^{s}-j}j\right\vert ^{p}.  \notag
\end{eqnarray}

Since $q_{k}$ is non-increasing we can write%
\begin{eqnarray*}
Q_{2^{n}}
&=&\sum\limits_{k=0}^{2^{n-1}-1}q_{k}+\sum\limits_{k=2^{n-1}}^{2^{n}-1}q_{k}
\\
&\leq &2\sum\limits_{k=0}^{2^{n-1}-1}q_{k}=2Q_{2^{n-1}} \\
&\leq &\cdots \leq 2^{n-s}Q_{2^{s}}
\end{eqnarray*}%
and%
\begin{equation}
\frac{Q_{2^{s}}}{2^{s}}\geq \frac{Q_{2^{n}}}{2^{n}}\left( s\leq n\right) .
\label{Q1}
\end{equation}%
Combine (\ref{Q}) and (\ref{Q1}) we get $\left( p=1/2\right) $%
\begin{eqnarray*}
&&\int\limits_{\mathbb{I}}\left( \sup\limits_{1\leq s<n}\left\vert
t_{2^{n}+2^{s}}\left( f_{n}\right) \right\vert \right) ^{1/2} \\
&\geq &\sum\limits_{s=0}^{n-1}\frac{1}{2^{s+1}Q_{2^{n}+2^{s}}^{1/2}}%
\left\vert \sum\limits_{j=2^{s-1}+1}^{2^{s}}q_{2^{s}-j}j\right\vert ^{1/2} \\
&\geq &c\sum\limits_{s=0}^{n-1}\frac{2^{s/2}Q_{2^{s}}^{1/2}}{%
2^{s}Q_{2^{n}}^{1/2}} \\
&\geq &c\sum\limits_{s=0}^{n-1}\frac{1}{2^{s/2}}\left( \frac{2^{s}}{2^{n}}%
\right) ^{1/2} \\
&=&\frac{cn}{2^{n/2}}.
\end{eqnarray*}%
Hence,%
\begin{equation*}
\frac{\left\Vert t^{\ast }\left( f_{n}\right) \right\Vert _{_{1/2}}^{1/2}}{%
\left\Vert f_{n}\right\Vert _{H_{1/2}}^{1/2}}\geq cn\rightarrow \infty
\end{equation*}%
as $n\rightarrow \infty $ $.$ Theorem \ref{p<1/2} is proved.
\end{proof}

Finally, we are ready to formulate a basic problem: \textit{to say }$\left\{
q_{k}\right\} $\textit{\ is a non-increasing and positive sequence. It is
known that the operator }$t^{\ast }\left( f\right) $\textit{\ is bounded
from }$L_{\infty }(\mathbb{I})$\textit{\ to }$L_{\infty }(\mathbb{I})$%
\textit{\ and }$p\in (1/2,1]$\textit{. Find the necessary and sufficient
conditions for the }$\left\{ q_{k}\right\} $\textit{\ sequence in order for
the maximum operator $t^{\ast }\left( f\right) $ to be bounded from the
Hardy space }$H_{p}(\mathbb{I})$\textit{\ to the space }$L_{p}(\mathbb{I})$%
\textit{.}

The paper will provide a complete answer to the given question. The
following theorem is true.

\begin{theorem}
\label{main} Let $\left\{ q_{k}\right\} $\textit{\ be a non-increasing and
positive sequence. It is known that the operator }$t^{\ast }\left( f\right) $%
\textit{\ is bounded from }$L_{\infty }(\mathbb{I})$\textit{\ to }$L_{\infty
}(\mathbb{I})$\textit{\ and }$p\in (1/2,1]$\textit{. }In order for the given
operator to be \textit{bounded from the Hardy space }$H_{p}(\mathbb{I})$%
\textit{\ to the space }$L_{p}(\mathbb{I})$ it is necessary and sufficient
that%
\begin{equation*}
\sup\limits_{N}\frac{2^{N\left( 1-p\right) }}{Q_{2^{N}}^{p}}%
\sum\limits_{j=1}^{N}Q_{2^{j}}^{p}2^{j\left( p-1\right) }<\infty .
\end{equation*}
\end{theorem}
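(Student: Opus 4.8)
The plan is to prove necessity and sufficiency separately, using the decomposition machinery from Theorems GN1, GN2 and the kernel estimates of Lemma \ref{gogi}.

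For the \emph{necessity} direction, I would mimic the construction already used in the proof of Theorem \ref{p<1/2}. Taking the test functions $f_{n}:=D_{2^{n+1}}-D_{2^{n}}$ with $\left\Vert f_{n}\right\Vert _{H_{p}}=2^{n(1-1/p)}$, the same computation gives $t_{2^{n}+2^{s}}(f_{n})=w_{2^{n}}Q_{2^{n}+2^{s}}^{-1}\sum_{j=1}^{2^{s}}q_{2^{s}-j}D_{j}$ for $s<n$. Estimating the maximal operator from below by restricting each summand to $I_{s}\backslash I_{s+1}$ and using $D_{j}(x)=j$ there, together with the monotonicity bound $Q_{2^{s}}/2^{s}\geq Q_{2^{n}}/2^{n}$ from \eqref{Q1}, I expect to obtain
\begin{equation*}
\int_{\mathbb{I}}\left(\sup_{1\leq s<n}\left\vert t_{2^{n}+2^{s}}(f_{n})\right\vert\right)^{p}\geq c\,\frac{1}{2^{n(1-p)}}\sum_{s=0}^{n-1}\frac{Q_{2^{s}}^{p}2^{s(p-1)}}{Q_{2^{n}}^{p}}.
\end{equation*}
Dividing by $\left\Vert f_{n}\right\Vert _{H_{p}}^{p}=2^{n(p-1)}$, boundedness of $t^{\ast}$ forces the right-hand quantity $2^{N(1-p)}Q_{2^{N}}^{-p}\sum_{j=1}^{N}Q_{2^{j}}^{p}2^{j(p-1)}$ to stay bounded, which is exactly the stated condition. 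The care needed here is to keep the $N$-dependence clean and to verify the lower bound $Q_{2^{n}+2^{s}}\leq cQ_{2^{n}}$, which again follows from monotonicity of $\{q_{k}\}$.

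For the \emph{sufficiency} direction, by the interpolation Theorem W2 and the assumed $(\infty,\infty)$ boundedness, it suffices to prove the quasi-local estimate of Theorem W1: for every $p$-atom $a$ supported on a dyadic interval $I=I(l,2^{M})$ I must show $\int_{\mathbb{I}\backslash I}\left(t^{\ast}a\right)^{p}\leq c_{p}$. After translation I may assume $I=I_{M}$. The standard reduction shows $t_{n}(a;x)=(a\ast F_{n})(x)$ vanishes unless $n>2^{M}$, so I expand $F_{n}$ via the decomposition \eqref{Fn1+Fn2}--\eqref{F1+F2} into the pieces $F_{n,1}$, $F_{n,2}^{(1)}$, $F_{n,2}^{(2)}$. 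The $F_{n,1}$ part involves only Dirichlet kernels $D_{2^{n_{j}}}$, whose support off $I_{M}$ is controlled by \eqref{Dir}; the Fej\'er-type pieces $F_{n,2}^{(1)}$ and $F_{n,2}^{(2)}$ are handled by inserting the kernel bound $\int_{\mathbb{I}}\sup_{1\leq n\leq 2^{N}}(n\left\vert K_{n}\right\vert)^{p}\leq c_{p}2^{N(2p-1)}$ from Lemma \ref{gogi}. Weighting these estimates by the coefficients $(q_{k+n^{(j)}}-q_{k+n^{(j)}+1})k/Q_{n}$ and $q_{n^{(j-1)}-1}(2^{n_{j}}-1)/Q_{n}$, and summing the telescoping differences of $q_{k}$, should convert the kernel bounds into sums of the form $Q_{2^{j}}^{p}2^{j(p-1)}$, after which the hypothesis
\begin{equation*}
\sup_{N}\frac{2^{N(1-p)}}{Q_{2^{N}}^{p}}\sum_{j=1}^{N}Q_{2^{j}}^{p}2^{j(p-1)}<\infty
\end{equation*}
gives the uniform bound $c_{p}$.

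The main obstacle will be the sufficiency argument, specifically bookkeeping the contributions of $F_{n,2}^{(1)}$ and $F_{n,2}^{(2)}$ uniformly in $n$. The difficulty is that $t^{\ast}a=\sup_{n}\left\vert a\ast F_{n}\right\vert$ is a supremum over all $n$ simultaneously, so I cannot estimate each $F_{n}$ in isolation; instead I must dominate $\sup_{n}\left\vert F_{n,2}^{(1)}\right\vert$ (and likewise for the other pieces) by a single integrable majorant built from the $\sup_{n}(n\left\vert K_{n}\right\vert)$ quantity of Lemma \ref{gogi}, and show that the scale-$2^{j}$ blocks assemble into precisely the weighted sum appearing in the hypothesis. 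Matching the exponent $2^{j(p-1)}$ coming from the atom normalization $\left\Vert a\right\Vert_{\infty}\leq 2^{M/p}$ against the exponent $2^{N(2p-1)}$ from the Fej\'er kernel estimate is the delicate point, and I expect the decomposition $\sum_{k}|\varepsilon_{k}(n)-\varepsilon_{k+1}(n)|Q_{2^{k}}$ from Theorem GN2 to be the right device for reorganizing the sum so that the telescoping of $Q_{2^{j}}$ produces the claimed necessary-and-sufficient quantity.
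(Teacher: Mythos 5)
Your proposal follows essentially the same route as the paper: for necessity, the same test functions $f_{n}=D_{2^{n+1}}-D_{2^{n}}$ and the same lower bound over $I_{s}\backslash I_{s+1}$ using $Q_{2^{s}}/2^{s}\geq Q_{2^{n}}/2^{n}$; for sufficiency, the same reduction to $p$-quasi-locality of $\sup_{n>N}|a\ast F_{n}|$ via Theorem W1, with the GN1 decomposition into $F_{n,1}$, $F_{n,2}^{(1)}$, $F_{n,2}^{(2)}$ and Lemma \ref{gogi} controlling the Fej\'er-type pieces. One bookkeeping slip: your displayed necessity bound should read $\int_{\mathbb{I}}(\sup_{s<n}|t_{2^{n}+2^{s}}(f_{n})|)^{p}\geq c\,Q_{2^{n}}^{-p}\sum_{s=0}^{n-1}Q_{2^{s}}^{p}2^{s(p-1)}$ with no prefactor $2^{-n(1-p)}$, since the factor $2^{n(1-p)}$ in the stated condition is produced only upon dividing by $\|f_{n}\|_{H_{p}}^{p}=2^{n(p-1)}$.
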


\begin{proof}[Proof of Theorem \protect\ref{main}]
\textit{\textbf{Necessity}. }We asuume that%
\begin{equation}
\sup\limits_{N}\frac{2^{N\left( 1-p\right) }}{Q_{2^{N}}^{p}}%
\sum\limits_{j=1}^{N}Q_{2^{j}}^{p}2^{j\left( p-1\right) }=\infty .
\label{nb}
\end{equation}%
From (\ref{f}) and (\ref{Q}) we have%
\begin{eqnarray*}
&&\frac{\left\Vert t^{\ast }\left( f_{n}\right) \right\Vert _{_{p}}^{p}}{%
\left\Vert f_{n}\right\Vert _{H_{p}}^{p}} \\
&\geq &c_{p}2^{n\left( 1-p\right) }\int\limits_{\mathbb{I}}\left(
\sup\limits_{1\leq s<n}\left\vert t_{2^{n}+2^{s}}\left( f_{n}\right)
\right\vert \right) ^{p} \\
&\geq &c_{p}2^{n\left( 1-p\right) }\sum\limits_{s=0}^{n-1}\frac{1}{%
2^{s+1}Q_{2^{n}+2^{s}}^{p}}\left\vert
\sum\limits_{j=1}^{2^{s}}q_{2^{s}-j}j\right\vert ^{p} \\
&\geq &c_{p}\frac{2^{n\left( 1-p\right) }}{Q_{2^{n}}^{p}}\sum%
\limits_{s=0}^{n-1}\frac{1}{2^{s}}\left\vert
\sum\limits_{j=2^{s-1}+1}^{2^{s}}q_{2^{s}-j}j\right\vert ^{p} \\
&\geq &c_{p}\frac{2^{n\left( 1-p\right) }}{Q_{2^{n}}^{p}}\sum%
\limits_{s=0}^{n-1}2^{s\left( p-1\right) }Q_{2^{s}}^{p}.
\end{eqnarray*}%
Then from (\ref{nb}) we get%
\begin{equation*}
\sup\limits_{n\in \mathbb{N}}\frac{\left\Vert t^{\ast }\left( f_{n}\right)
\right\Vert _{_{p}}^{p}}{\left\Vert f_{n}\right\Vert _{H_{p}}^{p}}=\infty
\end{equation*}%
and consequently, the operator $t^{\ast }$ is not bounded \textit{from the
Hardy space }$H_{p}\left( \mathbb{I}\right) $\textit{\ to the space }$%
L_{p}\left( \mathbb{I}\right) .$

\textit{\textbf{Sufficiency.}} We suppose that $f\in H_{p}\left( \mathbb{I}%
\right) $. Let function $a$ be an $H_{p}$ atom. It means that either $a$ is
constant or there is an interval $I_{N}(u)$ such that supp$\left( a\right)
\subset I_{N}(u)$, $\Vert a\Vert _{\infty }\leq 2^{N/p}$ and $\int a=0$.
Without lost of generality we can suppose that $u=0$. Consequently, for any
function $g$ which is $\mathcal{A}_{N}$-measurable we have that $\int ag=0$.
We prove that the operator $\sup\limits_{n>N}\left( f\ast F_{n}\right)
\left( x\right) $ is $H_{p}$-quasi local. That is,%
\begin{equation}
\int\limits_{\overline{I}_{N}}\left( \sup\limits_{n>N}\left\vert a\ast
F_{n}\right\vert \right) ^{p}\leq c_{p}\text{.}  \label{QL}
\end{equation}%
Let $x\in \overline{I}_{N}$. Then from (\ref{Fn1+Fn2}) we can write%
\begin{eqnarray}
\left\vert a\ast F_{n}\right\vert &=&\left\vert \int\limits_{\mathbb{I}%
}a\left( t\right) F_{n}\left( x\dotplus t\right) dt\right\vert \leq
2^{N/p}\int\limits_{I_{N}}\left\vert F_{n}\left( x\dotplus t\right)
\right\vert dt  \label{!+2} \\
&=&2^{N/p}\int\limits_{I_{N}}\left\vert F_{n,1}\left( x\dotplus t\right)
\right\vert dt+2^{N/p}\int\limits_{I_{N}}\left\vert F_{n,2}\left( x\dotplus
t\right) \right\vert dt.  \notag
\end{eqnarray}%
We have%
\begin{eqnarray*}
&&\int\limits_{I_{N}}\left\vert F_{n,1}\left( x\dotplus t\right) \right\vert
dt \\
&\leq &\frac{1}{Q_{n}}\sum_{j=1,n_{j}>N}^{r}Q_{n^{(j-1)}}\int%
\limits_{I_{N}}D_{2^{n_{j}}}\left( x\dotplus t\right) dt \\
&&+\frac{1}{Q_{n}}\sum_{j=1,n_{j}\leq
N}^{r}Q_{n^{(j-1)}}\int\limits_{I_{N}}D_{2^{n_{j}}}\left( x\dotplus t\right)
dt.
\end{eqnarray*}%
Since $t\in I_{N}$ and $x\notin I_{N}$ we have that $x\dotplus t\notin I_{N}$
and consequently by (\ref{Dir}) we get $D_{2^{n_{j}}}\left( x\dotplus
t\right) =0$ for $n_{j}>N.$ On the other hand, $\int%
\limits_{I_{N}}D_{2^{n_{j}}}\left( x\dotplus t\right) dt=\frac{1}{2^{N}}%
D_{2^{n_{j}}}\left( x\right) $ for $n_{j}\leq N$. Hence, we obtain%
\begin{eqnarray*}
&&\int\limits_{I_{N}}\left\vert F_{n,1}\left( x\dotplus t\right) \right\vert
dt \\
&\leq &\frac{1}{Q_{n}}\sum_{j=1,n_{j}\leq
N}^{r}Q_{n^{(j-1)}}\int\limits_{I_{N}}D_{2^{n_{j}}}\left( x\dotplus t\right)
dt \\
&=&\frac{1}{2^{N}Q_{n}}\sum_{j=1,n_{j}\leq
N}^{r}Q_{n^{(j-1)}}D_{2^{n_{j}}}\left( x\right) \\
&\leq &\frac{1}{2^{N}Q_{2^{N}}}\sum_{j=1}^{N}Q_{2^{j}}D_{2^{j}}\left(
x\right) .
\end{eqnarray*}%
Consequently, from the condition of the theorem we get%
\begin{eqnarray}
&&\int\limits_{\overline{I}_{N}}\sup\limits_{n>N}\left(
2^{N/p}\int\limits_{I_{N}}\left\vert F_{n,1}\left( x\dotplus t\right)
\right\vert dt\right) ^{p}dx  \label{1-2} \\
&\leq &\frac{c_{p}2^{N}}{2^{Np}Q_{2^{N}}^{p}}\sum_{j=1}^{N}Q_{2^{j}}^{p}\int%
\limits_{\overline{I}_{N}}D_{2^{j}}^{p}\left( x\right) dx  \notag \\
&=&\frac{c_{p}2^{N\left( 1-p\right) }}{Q_{2^{N}}^{p}}%
\sum_{j=1}^{N}Q_{2^{j}}^{p}2^{j\left( p-1\right) }\leq c_{p}<\infty .  \notag
\end{eqnarray}

From (\ref{F1+F2}) we get%
\begin{eqnarray}
&&\int\limits_{I_{N}}\left\vert F_{n,2}\left( x\dotplus t\right) \right\vert
dt  \label{int(F1+F2)} \\
&\leq &\int\limits_{I_{N}}\left\vert F_{n,2}^{\left( 1\right) }\left(
x\dotplus t\right) \right\vert dt+\int\limits_{I_{N}}\left\vert
F_{n,2}^{\left( 2\right) }\left( x\dotplus t\right) \right\vert dt.  \notag
\end{eqnarray}

We can write%
\begin{eqnarray*}
&&\frac{1}{Q_{n}}\sum_{j=1}^{r}\sum_{k=1}^{2^{n_{j}}-1}\left(
q_{k+n^{(j)}}-q_{k+n^{(j)}+1}\right) k\left\vert K_{k}\right\vert \\
&=&\frac{1}{Q_{n}}\sum_{j=1}^{r}\sum\limits_{m=1}^{n_{j}}%
\sum_{k=2^{m-1}}^{2^{m}-1}\left( q_{k+n^{(j)}}-q_{k+n^{(j)}+1}\right)
k\left\vert K_{k}\right\vert \\
&=&\frac{1}{Q_{n}}\sum_{j=1}^{r}\sum\limits_{m=1}^{n_{j+1}}%
\sum_{k=2^{m-1}}^{2^{m}-1}\left( q_{k+n^{(j)}}-q_{k+n^{(j)}+1}\right)
k\left\vert K_{k}\right\vert \\
&&+\frac{1}{Q_{n}}\sum_{j=1}^{r}\sum\limits_{m=n_{j+1}+1}^{n_{j}}%
\sum_{k=2^{m-1}}^{2^{m}-1}\left( q_{k+n^{(j)}}-q_{k+n^{(j)}+1}\right)
k\left\vert K_{k}\right\vert \\
&\leq &\frac{1}{Q_{n}}\sum_{j=1}^{r}\sum\limits_{m=1}^{n_{j+1}}\sup%
\limits_{2^{m-1}\leq k<2^{m}}\left( k\left\vert K_{k}\right\vert \right)
\sum_{k=2^{m-1}}^{2^{m}-1}\left( q_{k+n^{(j)}}-q_{k+n^{(j)}+1}\right) \\
&&+\frac{1}{Q_{n}}\sum_{j=1}^{r}\sum\limits_{m=n_{j+1}+1}^{n_{j}}\sup%
\limits_{2^{m-1}\leq k<2^{m}}\left( k\left\vert K_{k}\right\vert \right)
\sum_{k=2^{m-1}}^{2^{m}-1}\left( q_{k+n^{(j)}}-q_{k+n^{(j)}+1}\right) \\
&=&\frac{1}{Q_{n}}\sum_{j=1}^{r}\sum\limits_{m=1}^{n_{j+1}}\sup%
\limits_{2^{m-1}\leq k<2^{m}}\left( k\left\vert K_{k}\right\vert \right)
\left( q_{2^{m-1}+n^{(j)}}-q_{2^{m}+n^{(j)}}\right)
\end{eqnarray*}%
\begin{eqnarray*}
&&+\frac{1}{Q_{n}}\sum_{j=1}^{r}\sum\limits_{m=n_{j+1}+1}^{n_{j}}\sup%
\limits_{2^{m-1}\leq k<2^{m}}\left( k\left\vert K_{k}\right\vert \right)
\left( q_{2^{m-1}+n^{(j)}}-q_{2^{m}+n^{(j)}}\right) \\
&\leq &\frac{1}{Q_{n}}\sum_{j=1}^{r}q_{2^{n_{j+1}}}\sum%
\limits_{m=1}^{n_{j+1}}\sup\limits_{2^{m-1}\leq k<2^{m}}\left( k\left\vert
K_{k}\right\vert \right) \\
&&+\frac{1}{Q_{n}}\sum_{j=1}^{r}\sum\limits_{m=n_{j+1}+1}^{n_{j}}q_{2^{m-1}}%
\sup\limits_{2^{m-1}\leq k<2^{m}}\left( k\left\vert K_{k}\right\vert \right)
\\
&\leq &\frac{1}{Q_{n}}\sum_{j=1}^{n_{1}}q_{2^{j}}\sum\limits_{m=1}^{j}\sup%
\limits_{2^{m-1}\leq k<2^{m}}\left( k\left\vert K_{k}\right\vert \right) \\
&&+\frac{1}{Q_{n}}\sum\limits_{m=1}^{n_{1}}q_{2^{m-1}}\sup\limits_{2^{m-1}%
\leq k<2^{m}}\left( k\left\vert K_{k}\right\vert \right) \\
&\leq &\frac{2}{Q_{n}}\sum_{j=1}^{n_{1}}q_{2^{j-1}}\sum\limits_{m=1}^{j}\sup%
\limits_{2^{m-1}\leq k<2^{m}}\left( k\left\vert K_{k}\right\vert \right) .
\end{eqnarray*}%
Consequently, we have%
\begin{eqnarray*}
&&\int\limits_{I_{N}}\left\vert F_{n,2}^{\left( 1\right) }\left( x\dotplus
t\right) \right\vert dt \\
&\leq &\frac{2}{2^{N}Q_{n}}\sum_{j=1}^{N}q_{2^{j-1}}\sum\limits_{m=1}^{j}%
\sup\limits_{2^{m-1}\leq k<2^{m}}\left( k\left\vert K_{k}\left( x\right)
\right\vert \right) \\
&&+\frac{2}{2^{N}Q_{n}}\sum_{j=N+1}^{n_{1}}q_{2^{j-1}}\sum\limits_{m=1}^{N}%
\sup\limits_{2^{m-1}\leq k<2^{m}}\left( k\left\vert K_{k}\left( x\right)
\right\vert \right) \\
&&+\frac{2}{Q_{n}}\sum_{j=N+1}^{n_{1}}q_{2^{j-1}}\sum\limits_{m=N+1}^{j}\int%
\limits_{I_{N}}\sup\limits_{2^{m-1}\leq k<2^{m}}\left( k\left\vert
K_{k}\left( x\dotplus t\right) \right\vert \right) dt \\
&:&=J_{1}+J_{2}+J_{3}.
\end{eqnarray*}

Since $x\dotplus t\notin I_{N}$ by (\ref{fest}) and (\ref{SW}), we have $%
\left( m>N\right) $

\begin{eqnarray*}
&&\int\limits_{I_{N}}\sup\limits_{2^{m-1}\leq k<2^{m}}\left( k\left\vert
K_{k}\left( x\dotplus t\right) \right\vert \right) dt \\
&\leq &\sum\limits_{s=0}^{m}2^{s}\int\limits_{I_{N}}K_{2^{s}}\left(
x\dotplus t\right) dt \\
&=&\frac{1}{2^{N}}\sum\limits_{s=0}^{N}2^{s}K_{2^{s}}\left( x\right)
+\sum\limits_{s=N+1}^{m}2^{s}\int\limits_{I_{N}}K_{2^{s}}\left( x\dotplus
t\right) dt \\
&\leq &\frac{1}{2^{N}}\sum\limits_{s=0}^{N}2^{s}K_{2^{s}}\left( x\right)
\end{eqnarray*}%
\begin{eqnarray*}
&&+\sum\limits_{s=N+1}^{m}\sum\limits_{l=0}^{s}2^{l}\int%
\limits_{I_{N}}D_{2^{s}}\left( x\dotplus t\dotplus e_{l}\right) dt \\
&\leq &\frac{1}{2^{N}}\sum\limits_{s=0}^{N}2^{s}K_{2^{s}}\left( x\right) +%
\frac{2^{m}}{2^{N}}\sum\limits_{l=0}^{N-1}2^{l}\mathbf{1}_{I_{N}\left(
e_{l}\right) }\left( x\right) .
\end{eqnarray*}%
Consequently,%
\begin{eqnarray*}
J_{3} &\leq &\frac{2}{2^{N}Q_{n}}\sum_{j=N+1}^{n_{1}}q_{2^{j-1}}\left(
j-N\right) \sum\limits_{s=0}^{N}2^{s}K_{2^{s}}\left( x\right) \\
&&+\frac{2}{2^{N}Q_{n}}\sum_{j=N+1}^{n_{1}}q_{2^{j-1}}2^{j}\sum%
\limits_{l=0}^{N-1}2^{l}\mathbf{1}_{I_{N}\left( e_{l}\right) }\left( x\right)
\\
&=&\frac{2}{2^{2N}Q_{n}}\sum_{j=N+1}^{n_{1}}q_{2^{j-1}}2^{j}\frac{\left(
j-N\right) }{2^{j-N}}\sum\limits_{s=0}^{N}2^{s}K_{2^{s}}\left( x\right) \\
&&+\frac{2}{2^{N}Q_{n}}\sum_{j=N+1}^{n_{1}}q_{2^{j-1}}2^{j}\sum%
\limits_{l=0}^{N-1}2^{l}\mathbf{1}_{I_{N}\left( e_{l}\right) }\left(
x\right) .
\end{eqnarray*}

Since%
\begin{equation*}
Q_{n}\geq
\sum\limits_{j=1}^{2^{n_{1}}-1}q_{j}=\sum\limits_{r=1}^{n_{1}}\sum%
\limits_{j=2^{r-1}}^{2^{r}-1}q_{j}\geq
\sum\limits_{r=1}^{n_{1}}q_{2^{r}}2^{r-1}.
\end{equation*}%
We can write%
\begin{equation}
J_{3}\leq \frac{c}{2^{2N}}\sum\limits_{s=0}^{N}2^{s}K_{2^{s}}\left( x\right)
+\frac{c}{2^{N}}\sum\limits_{l=0}^{N-1}2^{l}\mathbf{1}_{I_{N}\left(
e_{l}\right) }\left( x\right) ,  \label{J3}
\end{equation}%
\begin{eqnarray}
J_{2} &\leq &\frac{c}{2^{2N}Q_{n}}\sum_{j=N+1}^{n_{1}}2^{j}q_{2^{j-1}}\sum%
\limits_{m=1}^{N}\sup\limits_{2^{m-1}\leq k<2^{m}}\left( k\left\vert
K_{k}\left( x\right) \right\vert \right)  \label{J2} \\
&\leq &\frac{c}{2^{2N}}\sum\limits_{m=1}^{N}\sup\limits_{2^{m-1}\leq
k<2^{m}}\left( k\left\vert K_{k}\left( x\right) \right\vert \right) .  \notag
\end{eqnarray}%
By Lemma \ref{m(p>1/2)} and from the condition of theorem we can write%
\begin{equation}
\int\limits_{\overline{I}_{N}}\sup\limits_{n>N}\left(
2^{N/p}\int\limits_{I_{N}}\left\vert F_{n,2}^{\left( 1\right) }\left(
x\dotplus t\right) \right\vert dt\right) ^{p}dx  \label{Fn1}
\end{equation}%
\begin{equation*}
\leq \frac{c_{p}2^{N\left( 1-p\right) }}{Q_{n}^{p}}%
\sum_{j=1}^{N}q_{2^{j-1}}^{p}\sum\limits_{m=1}^{j}\int\limits_{\overline{I}%
_{N}}\sup\limits_{2^{m-1}\leq k<2^{m}}\left( k\left\vert K_{k}\left(
x\right) \right\vert \right) ^{p}dx
\end{equation*}%
\begin{equation*}
+\frac{c_{p}2^{N}}{2^{2pN}}\sum\limits_{m=1}^{N}\int\limits_{\overline{I}%
_{N}}\sup\limits_{2^{m-1}\leq k<2^{m}}\left( k\left\vert K_{k}\left(
x\right) \right\vert \right) ^{p}dx
\end{equation*}%
\begin{equation*}
+c_{p}2^{N\left( 1-2p\right) }\sum\limits_{s=0}^{N-1}\int\limits_{\overline{I%
}_{N}}\left( 2^{s}K_{2^{s}}\left( x\right) \right) ^{p}dx
\end{equation*}%
\begin{equation*}
+c_{p}2^{N\left( 1-p\right) }\sum\limits_{l=0}^{N-1}2^{lp}\int\limits_{%
\overline{I}_{N}}\mathbf{1}_{I_{N}\left( e_{l}\right) }\left( x\right) dx
\end{equation*}%
\begin{equation*}
\leq \frac{c_{p}2^{N\left( 1-p\right) }}{Q_{n}^{p}}%
\sum_{j=1}^{N}q_{2^{j-1}}^{p}2^{j\left( 2p-1\right) }
\end{equation*}%
\begin{equation*}
+c_{p}2^{N\left( 1-2p\right) }\sum\limits_{s=0}^{N-1}2^{s\left( 2p-1\right) }
\end{equation*}%
\begin{equation*}
+c_{p}\frac{2^{N\left( 1-p\right) }}{2^{N}}\sum\limits_{l=0}^{N-1}2^{lp}
\end{equation*}%
\begin{equation*}
\leq \frac{c_{p}2^{N\left( 1-p\right) }}{Q_{n}^{p}}\sum_{j=1}^{N}\left(
q_{2^{j-1}}2^{j}\right) ^{p}2^{j\left( p-1\right) }+c_{p}
\end{equation*}%
\begin{eqnarray*}
&\leq &c_{p}\sup\limits_{N\in \mathbb{N}}\frac{2^{N\left( 1-p\right) }}{%
Q_{2^{N}}^{p}}\sum_{j=1}^{N}Q_{2^{j}}^{p}2^{j\left( p-1\right) }+c_{p} \\
&\leq &c_{p}<\infty .
\end{eqnarray*}

Analogously, we can prove that 
\begin{equation}
\int\limits_{\overline{I}_{N}}\sup\limits_{n>N}\left(
2^{N/p}\int\limits_{I_{N}}\left\vert F_{n,2}^{\left( 2\right) }\left(
x\dotplus t\right) \right\vert dt\right) ^{p}dx\leq c_{p}<\infty .
\label{Fn2}
\end{equation}

Combine (\ref{!+2}), (\ref{1-2}), (\ref{F1}), (\ref{Fn1}) and (\ref{Fn2}) we
complete the proof of Theorem \ref{main}.
\end{proof}

For $p=1,$ Theorem \ref{main} implies that the following two conditions are
equivalent:\newline

\begin{itemize}
\item The maximal operator $t^{\ast }$ is bounded from the dyadic Hardy
space $H_{1}\left( \mathbb{I}\right) $ to the space $L_{1}\left( \mathbb{I}%
\right) ;$\newline

\item $\sup\limits_{N\in \mathbb{N}}\frac{1}{Q_{2^{N}}}\sum%
\limits_{j=1}^{N}Q_{2^{j}}<\infty .$
\end{itemize}

On the other hand, in \cite{goginava2022some} it is proved that the
following two conditions are equivalent:\newline

\begin{itemize}
\item The maximal operator $t^{\ast }$ is bounded from the space $L_{\infty
}\left( \mathbb{I}\right) $ to the space $L_{\infty }\left( \mathbb{I}%
\right) ;$

\item $\sup\limits_{N\in \mathbb{N}}\frac{1}{Q_{2^{N}}}\sum%
\limits_{j=1}^{N}Q_{2^{j}}<\infty .$
\end{itemize}

Hence, we can conclude that the following.

\begin{theorem}
\label{three}The following three conditions are equivalent:\newline
\end{theorem}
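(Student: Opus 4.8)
The plan is to read off the three conditions from the two pairwise equivalences stated immediately above and then to close the triangle by transitivity through the common arithmetic condition on the partial sums $\frac{1}{Q_{2^N}}\sum_{j=1}^N Q_{2^j}$. Explicitly, the three conditions are: (i) the maximal operator $t^{\ast}$ is bounded from $H_1(\mathbb{I})$ to $L_1(\mathbb{I})$; (ii) $t^{\ast}$ is bounded from $L_\infty(\mathbb{I})$ to $L_\infty(\mathbb{I})$; and (iii) $\sup_{N\in\mathbb{N}}\frac{1}{Q_{2^N}}\sum_{j=1}^N Q_{2^j}<\infty$.

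First I would specialize the necessary-and-sufficient criterion of Theorem \ref{main} to the endpoint $p=1$. Setting $p=1$ annihilates both exponential factors, since $2^{N(1-p)}=1$ and $2^{j(p-1)}=1$, while the $p$-th powers become first powers; the criterion of Theorem \ref{main} thus collapses exactly to condition (iii). Since Theorem \ref{main} asserts that its criterion is equivalent to boundedness $H_p\to L_p$ for $p\in(1/2,1]$, and $1$ lies in this range, this yields the equivalence (i) $\Leftrightarrow$ (iii), which is the first pairwise equivalence recorded above.

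Next I would invoke the characterization of the $L_\infty\to L_\infty$ boundedness of $t^{\ast}$ from \cite{goginava2022some}, which states precisely that (ii) holds if and only if (iii) holds. Chaining the two equivalences through their shared condition gives (i) $\Leftrightarrow$ (iii) $\Leftrightarrow$ (ii), so that (i), (ii), (iii) are mutually equivalent, which is the assertion of the theorem.

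There is no genuine obstacle in this argument: the statement is a corollary obtained by transitivity once the $p=1$ specialization of Theorem \ref{main} is matched against the $L_\infty$ result of \cite{goginava2022some}. The only point requiring care is to verify that the two sources really produce the identical arithmetic condition (iii), and this is immediate from the collapse of the exponents at $p=1$ noted above.
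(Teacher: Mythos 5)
Your proposal is correct and follows the paper's own route exactly: specialize Theorem \ref{main} to $p=1$ (where the criterion collapses to $\sup_{N}\frac{1}{Q_{2^{N}}}\sum_{j=1}^{N}Q_{2^{j}}<\infty$) to get the equivalence with $H_{1}\to L_{1}$ boundedness, then combine with the characterization of $L_{\infty}\to L_{\infty}$ boundedness from \cite{goginava2022some} by transitivity. No substantive difference from the paper's argument.
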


\begin{itemize}
\item The maximal operator $t^{\ast }\left( f\right) $ is bounded from $%
L_{\infty }(\mathbb{I})$ to $L_{\infty }(\mathbb{I});$

\item The maximal operator $t^{\ast }\left( f\right) $ is bounded from $%
H_{1}(\mathbb{I})$ to $L_{1}(\mathbb{I});$

\item $\sup\limits_{n\in \mathbb{N}}\frac{1}{Q_{n}}%
\sum_{k=1}^{|n|}Q_{2^{k}}<\infty .$
\end{itemize}

\section{Applications to various summability methods}

Since, the Nörlund mean is a generalization of many other well-know means
with wide range of literature, in the last section we give applications of
our results.

\begin{example}
Fej\'er means: Let $q_{j}=1.$ then it is easy to see that $Q_{j}\sim j$ and
we have%
\begin{eqnarray*}
\frac{2^{N\left( 1-p\right) }}{Q_{2^{N}}^{p}}\sum%
\limits_{j=1}^{N}Q_{2^{j}}^{p}2^{j\left( p-1\right) } &=&\frac{2^{N\left(
1-p\right) }}{2^{Np}}\sum\limits_{j=1}^{N}2^{jp}2^{j\left( p-1\right) } \\
&=&2^{N\left( 1-2p\right) }\sum\limits_{j=1}^{N}2^{j\left( 2p-1\right) }.
\end{eqnarray*}%
Hence,%
\begin{equation*}
\left( \sup\limits_{N\in \mathbb{N}}\frac{2^{N\left( 1-p\right) }}{%
Q_{2^{N}}^{p}}\sum\limits_{j=1}^{N}Q_{2^{j}}^{p}2^{j\left( p-1\right)
}<\infty \right) \iff \left( p>1/2\right)
\end{equation*}%
and we have that the following two conditions are equivalent:\newline
\end{example}

\begin{itemize}
\item The maximal operator $\sup\limits_{n\in \mathbb{N}}\left\vert \sigma
_{n}\left( f\right) \right\vert $ is bounded from the dyadic Hardy space $%
H_{p}\left( \mathbb{I}\right) $ to the space $L_{p}\left( \mathbb{I}\right)
; $

\item $p>1/2.$\newline
\newline
Under the condition $p>1/2,$ the boundedness of maximal operator $%
\sup\limits_{n\in \mathbb{N}}\left\vert \sigma _{n}\left( f\right)
\right\vert $ was proved by Weisz \cite{weisz1996cesaro}, and the essence of
condition $p>1/2$ was proved by the author \cite{GogiEJA}.
\end{itemize}

\begin{example}
$\left( C,\alpha \right) $-means: Let $q_{j}:=A_{j}^{\alpha -1},\alpha \in
\left( 0,1\right) $. It is easy to see that $Q_{j}\sim j^{\alpha }$. Since%
\begin{eqnarray*}
\frac{2^{N\left( 1-p\right) }}{Q_{2^{N}}^{p}}\sum%
\limits_{j=1}^{N}Q_{2^{j}}^{p}2^{j\left( p-1\right) } &=&\frac{2^{N\left(
1-p\right) }}{Q_{2^{N}}^{p}}\sum\limits_{j=1}^{N}2^{jp\alpha }2^{j\left(
p-1\right) } \\
&=&\frac{2^{N\left( 1-p\right) }}{Q_{2^{N}}^{p}}\sum\limits_{j=1}^{N}2^{j%
\left( p\left( \alpha +1\right) -1\right) }
\end{eqnarray*}%
we conclude that 
\begin{equation*}
\sup\limits_{N\in \mathbb{N}}\frac{2^{N\left( 1-p\right) }}{Q_{2^{N}}^{p}}%
\sum\limits_{j=1}^{N}Q_{2^{j}}^{p}2^{j\left( p-1\right) }<\infty \iff \left(
p>\frac{1}{1+\alpha }\right) .
\end{equation*}%
Consequently, we have the following two conditions are equivalent:\newline
\end{example}

\begin{itemize}
\item The maximal operator $\sup\limits_{n\in \mathbb{N}}\left\vert \sigma
_{n}^{\alpha }\left( f\right) \right\vert $ is bounded from the dyadic Hardy
space $H_{p}\left( \mathbb{I}\right) $ to the space $L_{p}\left( \mathbb{I}%
\right) ;$

\item $p>\frac{1}{1+\alpha }.$\newline
\newline
Under the condition $p>\frac{1}{1+\alpha },$ the boundedness of maximal
operator $\sup\limits_{n\in \mathbb{N}}\left\vert \sigma _{n}^{\alpha
}\left( f\right) \right\vert $ was proved by Weisz \cite{weisz1996cesaro},
and the importance of condition $p>\frac{1}{1+\alpha }$ was proved by the
author \cite{Buda}.
\end{itemize}

\begin{example}
Let $q_{j}:=j^{\alpha -1},\alpha \in \lbrack 0,1)$. First, we consider the
case when $\alpha =0$. Then the Nörlund means coincide to the Nörlund
logarithmic means 
\begin{equation*}
t_{n}(f;x):=\frac{1}{Q_{n}}\sum_{k=1}^{n-1}\frac{S_{k}(f;x)}{n-k}.
\end{equation*}

Nörlund's logarithmic means with respect to the trigonometric system was
studied by Tkebuchava \cite{TkebuchavaAMAPN,TkebuchavaBull}. The convergence
and divergence of this means with respect to the Walsh systems was discussed
in \cite{GatGogiAMSDiv1,GatGogiAMSDiv2,GatGogiTkebJMAA,GoginavaMisc}. Since%
\begin{equation*}
\sup\limits_{n\in \mathbb{N}}\frac{1}{Q_{n}}\sum_{k=1}^{|n|}Q_{2^{k}}\sim
\sup\limits_{n\in \mathbb{N}}\frac{\left\vert n\right\vert ^{2}}{\log \left(
n+1\right) }\sim \sup\limits_{n\in \mathbb{N}}\log \left( n+1\right) =\infty
\end{equation*}%
from Theorem we conclude that the maximal operator $\sup\limits_{n\in 
\mathbb{N}}\left\vert t_{n}(f)\right\vert $ is not bounded from $H_{1}(%
\mathbb{I})$ to $L_{1}(\mathbb{I})$ and consequently, by interpolation
theorem can not be bounded from $H_{p}(\mathbb{I})$ to $L_{p}(\mathbb{I})$,
when $p<1$.

Now, we suppose that $\alpha \in (0,1)$. It is easy to see that%
\begin{eqnarray*}
&&\frac{2^{N\left( 1-p\right) }}{Q_{2^{N}}^{p}}\sum%
\limits_{j=1}^{N}Q_{2^{j}}^{p}2^{j\left( p-1\right) } \\
&=&\frac{2^{N\left( 1-p\right) }}{Q_{2^{N}}^{p}}\sum\limits_{j=1}^{N}2^{j%
\left( p\left( \alpha +1\right) -1\right) }
\end{eqnarray*}%
and%
\begin{equation}
\lim\limits_{n\rightarrow \infty }\frac{1}{n^{\alpha }}\sum_{k=1}^{n}\left(
n-k\right) ^{\alpha -1}S_{k}(f;x)=f\left( x\right) \text{ \ for a. e. }x\in 
\mathbb{I}.  \label{c,a}
\end{equation}%
we have the following two conditions are equivalent:\newline
\end{example}

\begin{itemize}
\item The maximal operator $\sup\limits_{n\in \mathbb{N}}\frac{1}{n^{\alpha }%
}\left\vert \sum_{k=1}^{n}\left( n-k\right) ^{\alpha
-1}S_{k}(f;x)\right\vert $ is bounded from the dyadic Hardy space $%
H_{p}\left( \mathbb{I}\right) $ to the space $L_{p}\left( \mathbb{I}\right)
; $

\item $p>\frac{1}{1+\alpha }.$\newline
\end{itemize}

\section{Declaration}

The author declare that he has not conflict of interest.

\section{Data Availability}

The author confirm that he does not use any data.

\end{document}